\definecolor{darkred}{rgb}{0.5,0,0}
\definecolor{darkgreen}{rgb}{0,0.5,0}
\definecolor{darkblue}{rgb}{0,0,0.5}
\crefname{sofic}{condition}{condition}
\crefname{assumption}{assumption}{assumptions}
\crefname{rassum}{property}{properties}
\crefname{inequality}{inequality}{inequality}
\numberwithin{equation}{section}    
\newcommand{\mynewtheorem}[4][]{
  \ifthenelse{\equal{#1}{}}{    
    \newtheorem{#2}{#3}         
  }{
    \newaliascnt{#2}{#1}        
    \newtheorem{#2}[#2]{#3}     
    \aliascntresetthe{#2}       
  }
  \crefname{#2}{#3}{#4}         
}
\theoremstyle{plain}
\newtheorem{Theorem}{Theorem}[section]
\crefname{Theorem}{Theorem}{Theorems}
\theoremstyle{definition}
\theoremstyle{remark}
\renewcommand{\epsilon}{\varepsilon}
\renewcommand{\phi}{\varphi}
\DeclareMathOperator{\Tr}{Tr}
\DeclareMathOperator{\diam}{diam}
\DeclareMathOperator{\supp}{spt}
\DeclareMathOperator{\spt}{spt}
\DeclareMathOperator{\lin}{lin}
\DeclareMathOperator{\id}{id}
\DeclareMathOperator{\sgn}{sgn}
\DeclareMathOperator{\elc}{elc}
\providecommand{\etdef}{\phantom{:}&\rlap{:}=}
\providecommand{\abs}[1]{\lvert#1\rvert}
\providecommand{\setsize}[1]{\lvert\ifthenelse{\equal{#1}{\cdot}}{{}\cdot{}}{#1}\rvert}
\providecommand{\bigabs}[1]{\bigl\lvert\ifthenelse{\equal{#1}{\cdot}}{{}\cdot{}}{#1}\bigr\rvert}
\providecommand{\Bigabs}[1]{\Bigl\lvert#1\Bigr\rvert}
\providecommand{\norm}[2][]{\lVert#2\rVert\ifthenelse{\equal{}{#1}}{}{_{#1}}}
\providecommand{\bignorm}[2][]{\bigl\Vert#2\bigr\Vert\ifthenelse{\equal{}{#1}}{}{_{#1}}}
\providecommand{\Bignorm}[2][]{\Bigl\Vert#2\Bigr\Vert\ifthenelse{\equal{}{#1}}{}{_{#1}}}
\providecommand{\floor}[1]{\lfloor#1\rfloor}
\providecommand{\isect}{\cap}
\providecommand{\Isect}{\bigcap}
\providecommand{\union}{\cup}
\providecommand{\Union}{\bigcup}
\providecommand{\from}{\colon}
\providecommand{\textq}[1]{\text{#1}\quad}
\providecommand{\qtext}[1]{\quad\text{#1}}
\providecommand{\qtextq}[1]{\quad\text{#1}\quad}
\providecommand{\xto}[2][]{\xrightarrow[#1]{#2}}
\providecommand{\spr}[3][]{\langle#2,#3\rangle\ifthenelse{\equal{#1}{}}{}{_{#1}}}
\providecommand{\ifu}[1]{\chi_{#1}}
\providecommand{\mo}[1]{\pi_{#1}}
\providecommand{\Res}{\mathcal R}
\providecommand{\cc}[1]{\overline{#1}}
\providecommand{\argmt}{{}\cdot{}}
\providecommand{\e}{\mathrm e}%
\renewcommand{\Im}{\operatorname{Im}}
\newcommand{\EE}{\mathbb{E}}
\newcommand{\QQ}{\mathbb{Q}}
\newcommand{\RR}{\mathbb{R}}
\newcommand{\CC}{\mathbb{C}}
\newcommand{\NN}{\mathbb{N}}
\newcommand{\ZZ}{\mathbb{Z}}
\newcommand{\PP}{\mathbb{P}}
\newcommand{\cA}{\mathcal{A}}
\newcommand{\cB}{\mathcal{B}}
\newcommand{\cC}{\mathcal{C}}
\newcommand{\cD}{\mathcal{D}}
\newcommand{\cF}{\mathcal{F}}
\newcommand{\cP}{\mathcal{P}}
\newcommand{\cS}{\mathcal{S}}
\newcounter{const@ntNo}
\newcommand{\reloadConst@nt}{%
  \stepcounter{const@ntNo}%
  \edef\genericConst@ntInternal{C_{\theconst@ntNo}}%
}
\newcommand{\genericConstant}[1]{%
  \reloadConst@nt%
  \expandafter\let\csname #1\endcsname\genericConst@ntInternal%
}
\newcommand{\hm}[1]{\textbf{*}\leavevmode{\marginpar{\tiny%
$\hbox to 0mm{\hspace*{-0.5mm}$\leftarrow$\hss}%
\vcenter{\vrule depth 0.1mm height 0.1mm width \the\marginparwidth}%
\hbox to 0mm{\hss$\rightarrow$\hspace*{-0.5mm}}$\\\relax\raggedright #1}}}
\title{Approximation of the integrated density of states on sofic groups}
\author%
{Christoph Schumacher, Fabian Schwarzenberger}
\begin{document}

\maketitle
\begin{abstract}
  In this paper we study spectral properties of self-adjoint operators on a large class of geometries given via sofic groups.
  We prove that the associated integrated densities of states can be approximated via finite volume analogues.
  This is investigated in the deterministic as well as in the random setting.
  In both cases we cover a wide range of operators including in particular unbounded ones.
  The large generality of our setting allows to treat applications from long-range percolation and the Anderson model.
  Our results apply to operators on~$\ZZ^d$, amenable groups, residually finite groups and therefore in particular to operators on trees.
  All convergence results are established without any ergodic theorem at hand.
\end{abstract}

\section{Introduction}

The study of self-adjoint operators on discrete structures has a long history in mathematical physics, both in the deterministic and as well as the random case.
The investigation of spectral properties of such operators is motivated as essential features of solutions of differential equations are encoded in the spectrum of the corresponding operator.
However it is in many cases hard to obtain results on spectrum by directly studying the operator.
One tool to overcome this difficulty is the investigation of the \emph{integrated density of states} (short IDS, also called spectral distribution function) as a rather simple object which still carries much of the spectral information of the operator.


In order to define the IDS one chooses a sequence of finite dimensional self-adjoint operators approximating the original operator in a suitable sense and considers their eigenvalue counting functions.
For each real number~$\lambda$ this function returns the number of eigenvalues of the approximating operator (counting multiplicity) which are not larger than~$\lambda$.
The IDS is then defined as the pointwise limit of the normalized eigenvalue counting functions, if the limit exists.
In this situation it is in many cases possible to show that the IDS equals a the so called \emph{spectral distribution function} (SDF), given via a trace of certain projections, see \eqref{PFtrace}. This equality is sometimes called the \emph{Pastur-Shubin-trace formula}.
Depending on the context, the SDF is sometimes called von Neumann-trace, see for instance \cite{LenzPV-07}, and in other situations the associated measure is known as the Plancherel measure or Kesten spectral measure, see e.g. \cite{BartholdiW-05}.
Now two questions occur: 
\begin{enumerate}[(a)]
  \item Does the limit of the eigenvalue counting functions exist?
  \item Does the Pastur-Shubin-trace formula hold?
\end{enumerate}

The investigation of these questions has a long history.
In the seminal papers \cite{Pastur-71} and \cite{Shubin-79}, the existence of the limit was first rigorously studied.
The authors thereof studied random ergodic and almost periodic operators in Euclidean space.
Later on many results occurred, both in the random as well as in the deterministic setting and for various geometries.
Convergence results on manifolds for random and periodic Schr\"odinger operators are studied in \cite{Sznitman-89,Sznitman-90,AdachiS-93,PeyerimhoffV-02,LenzPV-04} and in the discrete setting for finite difference operators on periodic graphs in \cite{MathaiY-02,MathaiSY-03,DodziukLMSY-03,Veselic-05b,BartholdiW-05}.
Note that the approximability of the zeroth $\ell^2$-Betti number can be interpreted as the evaluation of the IDS at one single point.
Therefore it is important to mention the works \cite{Lueck94,DodziukM-97,DodziukM-98,Eckmann-99,LueckS-99}, where this problem was studied.

In the present paper we study the questions (a) and (b) in a very general background.
First of all our geometry is very general as we can treat all graphs which are given as Cayley graphs of finitely generated sofic groups.
Note that this class of groups contains all amenable groups, residually finite groups and therefore especially all groups of sub-exponential growth as well as some exponentially growing groups, as for instance the free group. Furthermore it was shown in \cite{Cornulier-11} that there exist sofic groups which are not a limit of amenable groups.
The notion of sofic groups goes back to Gromov \cite{Gromov-99} and Weiss \cite{Weiss2000} and was later on studied for instance in \cite{ElekS-03,ElekS-04,Thom-08, Pestov-08,Bowen-10,Cornulier-11,Bowen-12}.
The class of operators we consider is in the deterministic as well as in the random setting not very restricted, too.
The deterministic operators are assumed to be self-adjoint and translationally invariant, and we assume that the compactly supported functions form a core.
In the random situation we assume translational invariance in distribution and a moment condition~\eqref{summable}.
In both settings we can prove the existence of the limit of the normalized eigenvalue counting functions and that this limit is given by the Pastur-Shubin trace formula.
Hence we give positive answers to the questions~(a) and~(b) in a very general situation.
Note that our assumptions allow in the random and in the non-random case unboundedness of the operators and of their hopping range.

The approximation of trees via finite volume graphs is an intensively studied problem, see e.g.\ \cite{AizenmanW-08} and references therein. 
The main obstacle is the non-amenability of trees, i.e.\ the average over a ball depends drasticly on the contribution of the boundary sphere of the ball.
The sphere of the ball has nodes of altered degree, and that prevents good approximation properties.
As a result, instead of the Cayley graph of the free group, \cite{AizenmanW-08} approximated the canopy tree, which displays the leaves of degree~$1$ in a prominent fashion.
The same phenomenon was encountered in \cite{Sznitman-89,Sznitman-90} in the continuous setting.
In order to construct good approximating graphs for regular trees, one resorts to regular graphs.
As shown by \cite{McKay81}, the correct strategy in order to approximate the regular tree is to avoid large quantities of small cycles.
Other possibilities to improve the approximation properties of balls are studied in \cite{FroeseHaslerSpitzer2011}.
There the authors insert weighted edges connecting the boundary elements.

In this paper we basically show that the definition of sofic groups gives a natural criterion for the choice of the approximating finite objects.
In \cref{freegrp}, we demonstrate that this definition can be exploited to construct approximating graphs for numerical purposes.
We hereby open the way to explore phenomena like eigenvalue statistics, which depend by definition on suitable approximations, for a wide variety of models.
In particular it would be interesting to study Poisson statistics vs.\ level repulsion in more models with absolutely continuous spectrum,
as done for the canopy tree in \cite{AizenmanW-08}.

We present the content of the paper in detail.
In the next subsection we present the setting and the definition of sofic groups, which goes back to Weiss \cite{Weiss2000}.
\Cref{dete} is devoted to prove the convergence result for deterministic operators.
\Cref{thm:weak1} should be compared to \cite[Theorem~2.3.1]{Lueck94}.
While L\"uck covers residually finite groups and bounded operators, we treat the more general class of sofic groups and allow unboundedness of the operators.
\Cref{secrandom} splits in three parts. From here on we study questions (a) and (b) for random operators on sofic groups.
This has, to the best of our knowledge, never been done in this general setting.
In \cref{countable} we explicitly construct random self-adjoint operators on countable groups, translationally invariant in distribution, employing techniques from \cite{PasturFigotin1992}.
We also define random Hamiltonians, the class of operators we study henceforth on sofic groups.
This class includes famous random models from mathematical physics such as the Anderson model on Cayley graphs and even percolation graphs.
\Cref{mean} is the randomized version of \cref{dete} for random Hamiltonians.
Here we define the finite dimensional approximating operators and show the convergence of their eigenvalue counting functions in expectation, see \cref{thm:expected}.
In \cref{almostsure} we improve the convergence of the expectation values to almost sure convergence.
The means of choice here is a well known concentration inequality by McDiarmid.
Note that all of our convergence results are established without any ergodic theorem at hand.
The final \cref{exa} contains a detailed example on long range percolation on sofic groups in \cref{percolation}.
To keep the link to other works quoted above, we also include in \cref{amenable,residuallyfinite} direct proofs that amenable and residually finite groups are sofic.
For free groups, the IDS is known explicitly.
We give a constructive proof, inspired by \cite{Biggs88},
of the fact that free groups are residually finite in \cref{freegrp}.
In \cite{KomarovMcNeillWebster07}, a different approach is suggested,
but it seems not as straight forward.
We illustrate our deterministic convergence result
for the free group with two generators with a numerical implementation.
We further pose open questions
about the quality of the corresponding sofic approximation.

\subsection*{Acknowledgments}
It is a great pleasure to thank
Christian Seifert, Ivan Veseli\'c,
Wolfgang Spitzer, Matthias Keller and Felix Pogorzelski
for enlightening and encouraging discussions.
In particular we thank Ivan Veseli\'c for the input of the concentration inequality by McDiarmid.


\subsection{Setting and Notation}

Let $G$ be group and $S\subseteq G$ a finite and symmetric set of generators.
The \emph{Cayley graph} $\Gamma=\Gamma(G,S)$ is the graph with vertices~$G$
and a directed edge from $x\in G$ to $y\in G$, if $xy^{-1}\in S$.
We label the edge between~$x$ and~$y$ with $xy^{-1}$.
For any graph~$(V,E)$ the \emph{graph distance}~$d^{(V,E)}\from V\times V\to\NN_0$
is given as the length of the shortest path between the arguments,
ignoring the direction of the edges.
We denote the ball around the identity element of~$G$
with respect to $d^\Gamma$ by~$B_r^G$.
\begin{Definition}[cf.~\cite{Weiss2000}]
  In the above setting, $G$ is \emph{sofic},
  if for all $\epsilon>0$ and $r\in\NN$
  there is a finite directed graph $(V_{r,\epsilon},E_{r,\epsilon})$,
  edge labeled by~$S$, which has a finite subset
  $V_{r,\epsilon}^{(0)}\subseteq V_{r,\epsilon}$ such that:
  \begin{enumerate}[({S}1)]
    \item\label[sofic]{S1}
      for all $v\in V_{r,\epsilon}^{(0)}$ the $r$-ball around~$v$
      in the graph distance of $(V_{r,\epsilon},E_{r,\epsilon})$
      is isomorphic as a labeled graph to~$\Gamma|_{B_r^G}$.
    \item\label[sofic]{S2}
      $\setsize{V_{r,\epsilon}^{(0)}}
        \ge(1-\epsilon)\setsize{V_{r,\epsilon}}$.
  \end{enumerate}
\end{Definition}
Note that the property of being sofic is independent of the specific choice of the symmetric generating system $S$, c.f. \cite{Weiss2000}.
Except otherwise mentioned, we assume that the group~$G$ is sofic. In order to simplify notation,
we choose some function $\epsilon\from\NN\to(0,\infty)$
with $\lim_{r\to\infty}\epsilon(r)=0$ and write
\begin{equation}\label{eq:simplify}
  \Gamma_r:=(V_r,E_r):=(V_{r,\epsilon(r)},E_{r,\epsilon(r)})\textq,
  V_r^{(0)}:=V_{r,\epsilon(r)}^{(0)}\textq,
  d_r:=d^{(V_r,E_r)}\text.
\end{equation}
Throughout the paper we deal with the Hilbert space~$\ell^2(G)$
of square summable functions on~$G$.
We denote the Kronecker delta~$\delta_x\in\ell^2(G)$ on $x\in G$
by $\delta_x\from G\to\{0,1\}$, i.e.\ $\delta_x(z)=1$ iff $x=z$.
The set of compactly supported functions on~$G$
is $D_0:=\lin\{\delta_x\mid x\in G\}$.

\section{Deterministic approximation results}\label{dete}

Let $A\from\ell^2(G)\to\ell^2(G)$ be a self-adjoint operator and denote
the matrix element for $x,y\in G$ by $a(x,y):=\spr{\delta_x}{A\delta_y}$.
We assume that~$A$ satisfies
\begin{enumerate}[({A}1)]
  \item\label[assumption]{transl}
    $a(x,y)=a(xz,yz)$ for all $x,y,z\in G$,
  \item\label[assumption]{core}
    The set of compactly supported functions~$D_0$ is a core for~$A$.
\end{enumerate}
Note that \cref{core} implies
$\norm[2]{A\delta_x}^2=\sum_{y\in G}\abs{a(x,y)}^2<\infty$ for all $x\in G$.

\begin{Remark}
  Note that the operators fulfilling \cref{transl,core}
  can be unbounded.
  An example of such an operator on~$\ZZ$
  can easily be constructed with with help of the Fourier transformation
  $\cF\from\ell^2(\ZZ)\to L^2(S^1)$, which is unitary.
  Therefore, $\cF$ conjugates self-adjoint operators with self-adjoint ones
  and unbounded ones with unbounded ones.
  A self-adjoint multiplication operator
  which multiplies with an unbounded function in~$L^2(S^1)$
  is by~$\cF$ conjugated to an unbounded operator on~$\ell^2(G)$.
  The latter can be expressed as convolution operator
  and is thus satisfies \cref{transl}.
  The Fourier transform maps $L^2(S^1)$ onto $\ell^2(\ZZ)$,
  and for a convolution with an $\ell^2$-function,
  \cref{esa} tells us that \cref{core} is satisfied, too.
\end{Remark}


For each $x\in V_r^{(0)}$ we have a labeled graph isomorphism
\begin{equation}\label{psiso}
  \psi_{x,r}\from B_r^{V_r}(x)\to B_r^G\text.
\end{equation}
Note that for $x,y\in V_r^{(0)}$ with $d_r(x,y)<r$ we have
\begin{equation}\label{eq:psiInverse}
  \psi_{x,r}(y)
    =(\psi_{y,r}(x))^{-1}\text,
\end{equation}
since the labels along a path from $x$ to $y$
are preserved and equal the inverse labels of the reversed path.
In particular we have $\psi_{x,r}(x)=\id\in G$ for all~$x$.

\begin{Lemma}\label{la:welldef}
 Let $r\in \NN$. If $x,y\in V_r$ and $v,w\in V_r^{(0)}$ fulfill $x,y\in B_{r/2}^{V_r}(v)\cap B_{r/2}^{V_r}(w)$, then we have 
$$\psi_{v,r}(x)(\psi_{v,r}(y))^{-1}=\psi_{w,r}(x)(\psi_{w,r}(y))^{-1}.$$
\end{Lemma}
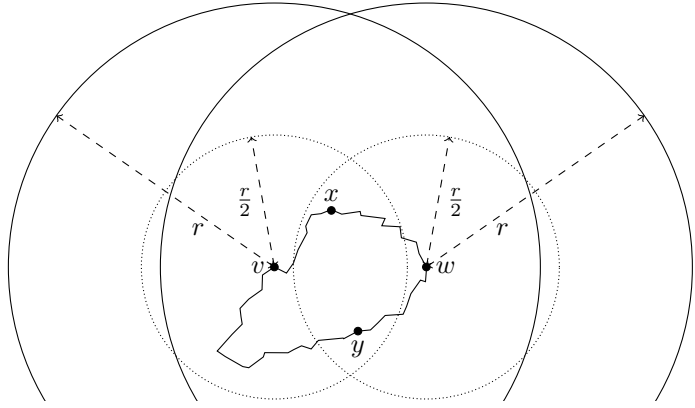
\begin{figure}
  \centering
  \begin{tikzpicture}[scale=.5,decoration={random steps,segment length=1.5mm}]
    \coordinate (v) at (-2,0);
    \coordinate (w) at (2,0);
    \coordinate (x) at (-.5,1.5);
    \coordinate (y) at (.2,-1.7);
    \newlength{\radius}\setlength{\radius}{7cm}
    \clip (-2cm-\radius-1pt,-\radius/2-1pt) rectangle (2cm+\radius+1pt,\radius+1pt);
    \draw [densely dotted] (v) circle (\radius/2);
    \draw (v) circle (\radius);
    \filldraw (v) circle (1mm) node [left] {$v$};
    \draw [densely dotted] (w) circle (\radius/2);
    \draw (w) circle (\radius);
    \filldraw (w) circle (1mm) node [right] {$w$};
    \filldraw (x) circle (1mm) node [above] {$x$};
    \filldraw (y) circle (1mm) node [below] {$y$};
    \draw [decorate] (y) .. controls (-4,-3) .. (v);
    \draw [decorate] (v) to [out=0,in=180] (x);
    \draw [decorate] (y) to [bend right] (w);
    \draw [decorate] (w) to [bend right] (x);
    \draw [<->,dashed] (v) -- +(100:\radius/2)
      node [pos=.5,left] {$\tfrac r2$};
    \draw [<->,dashed] (w) -- +(80:\radius/2)
      node [pos=.5,right] {$\tfrac r2$};
    \draw [<->,dashed] (v) -- +(145:\radius)
      node [pos=.35,below] {$r$};
    \draw [<->,dashed] (w) -- +(35:\radius)
      node [pos=.35,below] {$r$};
  \end{tikzpicture}
  \caption{To \cref{la:welldef}.  Note that all paths stay inside the solid balls.}
  \label{fig:welldef}
\end{figure}
\begin{proof}
  Let $x,y\in V_r$ and $v,w\in V_r^{(0)}$ be such that $x,y\in B_{r/2}^{V_r}(v)\cap B_{r/2}^{V_r}(w)$.
  Then $k:=d_r(x,y)\le r$ and hence all shortest paths in $\Gamma_r$ connecting $x$ and $y$ are completely contained in $B_r^{V_r}(v)$ as well as in $B_r^{V_r}(w)$.
  We consider one of these shortest (directed) paths from $x$ to $y$.
  Let $(s_1,\dotsc,s_k)$ be the vector of the labels of this path.
  Then we have by the choice of $\psi_{v,r}$ that 
  \[
    \psi_{v,r}(x)(\psi_{v,r}(y))^{-1}=s_k\cdots s_1 (\psi_{v,r}(y))(\psi_{v,r}(y))^{-1}=s_k\cdots s_1
  \]
  As we also have $\psi_{w,r}(x)=s_k\cdots s_1 (\psi_{w,r}(y))$, the claim follows.
\end{proof}

We define the projection $A_r\from\ell^2(V_r)\to\ell^2(V_r)$ of~$A$
to the graph~$\Gamma_r$ by
\begin{gather*}
  (A_rf)(x):=\sum_{y\in V_r}a_r(x,y)f(y)\text{, where}\\
  a_r(x,y):=\begin{cases}
    a(\psi_{v,r}(x),\psi_{v,r}(y))
      &\text{if $\exists v\in V_r^{(0)}\colon x,y\in B_{r/3}^{V_r}(v)$}\\
    0 &\text{else.}
  \end{cases}
\end{gather*}
This operator is well-defined by \cref{la:welldef}.
Note that $A_r$ is a symmetric and hence self-adjoint operator on~$\ell^2(V_r)$.
\begin{Remark}\label{remark/3}
  The reason why we use $r/3$ instead of $r/2$ is the following.
  In the proofs of \cref{thm:weak1,thm:expected},
  we need to ensure that $y\in B_r^{V_r}(x_0)$
  whenever $x_0\in V_r^{(0)}$, $x\in B_{r/3}^{V_r}(x_0)$
  and $a_r(x,y)\ne0$.
  As $B_r^{V_r}(x_0)$ is the domain of $\psi_{x_0,r}$,
  we can find corresponding elements in~$B_r^G(\id)$.
  \begin{figure}
    \centering
    \begin{tikzpicture}
      \newlength{\Radius}\setlength{\Radius}{3cm}
      \clip (-\Radius-1pt,-\Radius/3-1pt) rectangle (\Radius+1pt,\Radius+1pt);
      \draw (0,0) circle (\Radius) node [left] {$x_0$};
      \draw [fill] (0,0) circle (.2mm);
      \draw [<->,dashed] (0,0) -- (100:\Radius)
        node [pos=.6,below left] {$r$};
      \draw [densely dotted] (0,0) circle (\Radius/3);
      \draw [<->,dashed] (0,0) -- (-40:\Radius/3)
        node [pos=.4,below left] {$\frac r3$};
      \filldraw (.8,.4) circle (.2mm) node [below left] {$x$};
      \filldraw (1.5,1) circle (.2mm) node [left] {$v$};
      \coordinate (v) at (1.5,1);
      \draw [densely dotted] (v) circle (\Radius/3);
      \draw [<->,dashed] (v) -- +(-40:\Radius/3)
        node [pos=.4,below left] {$\tfrac r3$};
      \filldraw (2.4,1.2) circle (.2mm) node [left] {$y$};
    \end{tikzpicture}
    \caption{Illustration to \cref{remark/3}}
    \label{fig:r/3}
  \end{figure}
\end{Remark}

Define for each $r\in\NN$ the normalized eigenvalue counting function~$N_r$
of~$A_r$ by
\[
  N_r\from\RR\to[0,1]\textq,
  N_r(\lambda):=\frac
    {\setsize{\{\text{eigenvalues of $A_r$ not larger than $\lambda$}\}}}
    {\abs{V_r}}\text,
\]
where the eigenvalues are counted with multiplicity.
If for increasing $r$ the (pointwise) limit of these functions exists, it is called the \emph{integrated density of states}.
Given the operator~$A$ we denote by~$E_\lambda$
the spectral projection on the interval~$(-\infty,\lambda]$.
Using this we set $N\from\RR\to[0,1]$ as
\begin{equation}\label{PFtrace}
  N(\lambda):=\spr{\delta_{\id}}{E_\lambda\delta_{\id}}\text.
\end{equation}
This is a distribution function for a probability measure, which we called the spectral distribution function (SDF). 
The next Theorem shows that the integrated density of states exists and that it equals the spectral distribution function. In other words we show that the \emph{Pastur-Shubin-trace formula} holds.

\begin{Theorem}\label{thm:weak1}
  Let $N$ and $N_n$ be given as above.  Then
  \begin{equation*}
    \lim_{r\to\infty}N_r(\lambda)=N(\lambda)
  \end{equation*}
  at all continuity points~$\lambda$ of~$N$.
\end{Theorem}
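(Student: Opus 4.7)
The plan is to prove \cref{thm:weak1} by interpreting both sides as cumulative distribution functions of probability measures on $\RR$ and then establishing weak convergence of these measures. Let $\mu_r:=\frac{1}{\setsize{V_r}}\sum_{x\in V_r}\spr{\delta_x}{E^{(r)}_\cdot\delta_x}$ denote the averaged spectral measure of $A_r$ (with $E^{(r)}$ its spectral resolution) and $\mu:=\spr{\delta_{\id}}{E_\cdot\delta_{\id}}$. Then $N_r$ and $N$ are the distribution functions of the probability measures $\mu_r$ and $\mu$, so the claim is equivalent to $\mu_r\rightharpoonup\mu$ weakly. By the Portmanteau theorem this reduces further to showing $\int\phi\,d\mu_r\to\int\phi\,d\mu$ for every $\phi$ in a suitable determining class.

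For the test class I would take either polynomials (combined with a tightness argument for unbounded $A$) or bounded continuous functions accessed through a functional-calculus representation such as Helffer--Sjöstrand, or resolvents $\phi_z(\lambda)=(\lambda-z)^{-1}$ with $z\in\CC\setminus\RR$. For any such $\phi$ one writes
\[
  \int\phi\,d\mu_r
  =\frac{1}{\setsize{V_r}}\sum_{x\in V_r^{(0)}}\spr{\delta_x}{\phi(A_r)\delta_x}
  +\frac{1}{\setsize{V_r}}\sum_{x\notin V_r^{(0)}}\spr{\delta_x}{\phi(A_r)\delta_x}.
\]
The second sum is dominated by $\epsilon(r)\norm{\phi}_\infty$ and vanishes as $r\to\infty$ by \cref{S2}. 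For the first sum, the crucial observation is that for $x\in V_r^{(0)}$ the graph isomorphism $\psi_{x,r}$ of \eqref{psiso}, together with the definition of $A_r$ through matching $A$'s entries on $r/3$-balls (cf.\ \cref{remark/3}), makes the local structure of $A_r$ near $x$ a faithful copy of that of $A$ near $\id$. Hence whenever $\phi(A)$ has ``effectively finite range''---i.e.\ $\spr{\delta_{\id}}{\phi(A)\delta_{\id}}$ can be approximated by an expression depending only on $A$'s matrix entries in a fixed ball around $\id$---one gets $\spr{\delta_x}{\phi(A_r)\delta_x}\approx\spr{\delta_{\id}}{\phi(A)\delta_{\id}}$ uniformly in $x\in V_r^{(0)}$, and averaging yields the required convergence.

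The main obstacle is making the effective-finite-range statement rigorous for an unbounded $A$. In the cleanest case, $\phi(\lambda)=\lambda^k$ with $A$ bounded and of finite hopping range, closed walks of length $k$ from $x$ stay within a fixed radius, and the matching is exact once $r/3$ exceeds that radius. In general one needs (i) to approximate $A$ by bounded, finite-range, translationally invariant operators, using $\norm[2]{A\delta_{\id}}^2=\sum_y\abs{a(\id,y)}^2<\infty$ (from \cref{core}) to control the truncation error; and (ii) to quantify off-diagonal decay of $\phi(A)$, e.g.\ via a Combes--Thomas-type resolvent estimate, so that bounded functions of $A$ are computable locally up to an error that vanishes with the localization radius. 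Tightness of $\{\mu_r\}$, required to rule out mass escaping to infinity, follows from the uniform second-moment bound $\int\lambda^2\,d\mu_r(\lambda)=\frac{1}{\setsize{V_r}}\Tr(A_r^2)\le\norm[2]{A\delta_{\id}}^2$ obtained by the same local-matching argument applied to $\phi(\lambda)=\lambda^2$. Combining these ingredients with the decomposition above gives $\int\phi\,d\mu_r\to\int\phi\,d\mu$ on the chosen determining class, and hence the claimed convergence $N_r(\lambda)\to N(\lambda)$ at every continuity point of $N$.
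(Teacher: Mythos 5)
Your overall architecture is the same as the paper's: reduce to weak convergence via Portmanteau, test against resolvents, split the trace over $V_r^{(0)}$ and its complement (the latter absorbed into an $O(\epsilon(r))$ error by \cref{S2}), and compare $\spr{\delta_x}{\phi(A_r)\delta_x}$ with $\spr{\delta_{\id}}{\phi(A)\delta_{\id}}$ by local matching. The problem is that the one step you flag as ``the main obstacle'' is in fact the entire content of the proof, and the tools you propose for it do not work in the paper's generality. Polynomials are not an admissible test class here: \cref{core} only gives $\sum_y\abs{a(\id,y)}^2<\infty$, so $\int\lambda^k\,dN(\lambda)$ need not exist for $k>2$ and the moment method is unavailable, independently of tightness. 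A Combes--Thomas bound on off-diagonal decay of $(z-A)^{-1}$ requires some summability or boundedness of the hopping beyond $\ell^2$ rows, and the operators admitted by \cref{transl,core} can be unbounded with unbounded hopping range; so ``quantify off-diagonal decay of $\phi(A)$'' is precisely the assertion you cannot establish from the stated hypotheses.

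The paper closes this gap by a different and more elementary mechanism, which your proposal is missing: since $D_0$ is a core (\cref{core}), the set $(z-A)(D_0)$ is dense in $\ell^2(G)$ (\cref{la:compt}), so for any $\kappa>0$ one can pick $\psi$ with $\norm[2]{\delta_{\id}-\psi}<\kappa$ and $\phi:=(z-A)^{-1}\psi\in D_0$. Replacing $\delta_{\id}$ by $\psi$ costs only $2\kappa/\abs{\Im z}$, and the second resolvent identity then reduces everything to $\norm[2]{(A-\hat A_{r,x})\phi}$ for a \emph{fixed compactly supported} $\phi$. That quantity is controlled by the tail $\sum_{h\in\spt\phi}\sum_{g\notin B_{r/3}^G}\abs{a(g,h)}^2\to0$, using only the $\ell^2$ row condition and the fact that $\hat a_{r,x}$ agrees with $a$ on the relevant balls (the $r/3$ device of \cref{remark/3}). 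No decay estimate on the resolvent kernel is ever needed. Without this substitution of $\delta_{\id}$ by $\psi$ --- i.e.\ without actually using the core hypothesis --- your argument does not go through for the class of operators the theorem covers.
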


In order to proof this \cref{thm:weak1} we will use the following Lemma.
\begin{Lemma}\label{la:compt}
 Let $H:D(H)\to\ell^2(G)$ be a self-adjoint operator, where $D(H)\subseteq \ell^2(G)$ and assume that $D_0$ is core of $H$. Then for each positive constant $\kappa$ and $\xi\in\ell^2(G)$ there exists $\psi\in\ell^2(G)$ such that
\begin{align*}
 \norm[2]{\xi-\psi}<\kappa \qquad\text{and}\qquad (z-H)^{-1}\psi\in D_0.
\end{align*}
\end{Lemma}
\begin{proof}
 Let $\kappa>0$ and $\xi\in\ell^2(G)$ be given. As $D_0$ is a core of $H$, it is dense in $D(H)$ with respect to the norm $\norm[H]{\argmt}$.
  The map 
\[
z-H\from(\cD(H),\norm[H]{\argmt})\to(\ell^2(G),\norm[2]{\argmt}) 
\]
  is continuous and surjective, and so
  \begin{equation}\label{eq:lacompt:1}
    (z-H)(D_0)
      =\{\psi\in\ell^2(G)\mid(z-H)^{-1}\psi\in D_0\}
  \end{equation}
  is dense in~$\ell^2(G)$.
  This construction allows to find an element $\psi\in (z-H)(D_0)$ such that $\norm[2]{\xi-\psi}<\kappa$.
  Furthermore equation \eqref{eq:lacompt:1} shows that $(z-H)^{-1}\psi$ is compactly supported.
\end{proof}

\begin{proof}[Proof of Theorem \ref{thm:weak1}]
  We consider the set
  \begin{equation*}
    \cS:=\Bigl\{f\from\RR\to\CC\Bigm|
         \lim_{r\to\infty}\int_\RR f(x)dN_r(x)
           =\int_\RR f(x)dN(x)\Bigr\}\text.
  \end{equation*}
  $\cS$~is a linear space and closed with respect to
  $\norm[\infty]{\argmt}$-limits, because for $f_n\in\cS$
  and $f\from\RR\to\CC$ with
  $\lim_{n\to\infty}\norm[\infty]{f-f_n}=0$ we have
  \begin{align*}&
    \Bigabs{\int_\RR f(x)dN_r(x)-\int_\RR f(x)dN(x)}
    \le\Bigabs{\int_\RR f_n(x)dN_r(x)-\int_\RR f_n(x)dN(x)}\\&\quad
      +\Bigabs{\int_\RR(f(x)-f_n(x))dN_r(x)+\int_\RR(f(x)-f_n(x))dN(x)}\\&
    \le\Bigabs{\int_\RR f_n(x)dN_r(x)-\int_\RR f_n(x)dN(x)}
      +2\norm[\infty]{f(x)-f_n(x)}\\&
    \xto{r\to\infty}2\norm[\infty]{f(x)-f_n(x)}
    \text.
  \end{align*}
  Below we show
  \begin{equation}\label{RinS}
    \Res:=\{x\mapsto(z-x)^{-1}\mid z\in\CC\setminus\RR\}\subseteq\cS\text.
  \end{equation}
  Since $\cS$ is a vector space, it contains all of $\lin\Res$.
  By Cauchy's integral formula integer powers of functions in~$\Res$
  \begin{equation*}
    x\mapsto(z-x)^{-n}
      =\frac1{2\pi i}\oint_{\partial B_{\Im z/2}(z)}
        \frac{(z-\zeta)^{-n}}{\zeta-x}\,d\zeta
  \end{equation*}
  are uniform limits of linear combinations of functions in~$\Res$
  and thereby members of~$\cS$.
  To make this last fact explicit, let $r:=\abs{\Im z/2}$
  and consider for each $x\in\RR$ the smooth path
  $\gamma_z\from[0,1]\to\partial B_r(x)$, $\gamma_z(t):=z+r\e^{2\pi it}$.
  We approximate the Cauchy integral above by Riemann sums.
  For $k\in\NN$ we have
  \begin{align*}
    D_x\etdef
    \Bigabs{
      \oint_{\partial B_r(z)}\frac{(z-\zeta)^{-n}}{\zeta-x}\,d\zeta
     -\sum_{j=0}^{k-1}\frac{(z-\gamma_z(j/k))^{-n}}{\gamma_z(j/k)-x}
       \cdot\frac{\dot\gamma_z(j/k)}k
    }\\&
    =\Bigabs{\sum_{j=0}^{k-1}
      \int_{(j-1)/k}^{j/k}\frac{(z-\gamma_z(t))^{-n}}{\gamma_z(t)-x}
        \cdot\dot\gamma_z(t)\,dt
     -\frac{(z-\gamma_z(j/k))^{-n}}{\gamma_z(j/k)-x}
       \cdot\frac{\dot\gamma_z(j/k)}k
    }\\&
    \le\sum_{j=0}^{k-1}\int_{j/k}^{(j+1)/k}\Bigabs{
      \frac{(z-\gamma_z(t))^{-n}}{\gamma_z(t)-x}\cdot\dot\gamma_z(t)
     -\frac{(z-\gamma_z(j/k))^{-n}}{\gamma_z(j/k)-x}
       \cdot\dot\gamma_z(j/k)
    }\,dt\text.
  \end{align*}
  Now fix $\epsilon>0$ and define the compact set
  $K:=\bigl\{x\in\RR\bigm|\abs{z-x}\le(1+\frac{4\pi}{\epsilon r^n})r\bigr\}$.
  For all $x\in\RR\setminus K$,
  we have
  \begin{equation*}
    \Bigabs{\frac{(z-\gamma_z(t))^{-n}}{\gamma_z(t)-x}\cdot\dot\gamma_z(t)}
      \le2\pi r\frac{r^{-n}}{\abs{z-x}-r}
      \le\frac\epsilon2\text.
  \end{equation*}
  By the triangle inequality follows $D_x\le\epsilon$.
  For $x\in K$, we use uniform continuity of the map
  \begin{equation*}
    K\times[0,1]\ni(x,t)\mapsto\Gamma(x,t)
      :=\frac{(z-\gamma_z(t))^{-n}}{\gamma_z(t)-x}\cdot\dot\gamma_z(t)
  \end{equation*}
  to choose~$k$ large enough, such that
  $\abs{\Gamma(x,t)-\Gamma(x,j/k)}\le\epsilon$
  for all $x\in K$, $j\in\{0,\dotsc,k-1\}$ and $t\in[j/k,(j+1)/k]$.
  With this, $D_x\le\epsilon$ holds for all $x\in\RR$.
  We have thus established uniform convergence of the Riemann sums.

  Products of functions in~$\Res$ are in~$\cS$, too,
  because by partial fraction expansion
  they can be expressed as linear combinations of functions in~$\Res$
  and powers of those.
  Therefore, $\cS$ contains the algebra generated by $\Res$.
  Note that $\Res$ separates points, is closed under conjugation and for any $x\in\RR$ we find $f\in\Res$ with $f(x)\neq 0$. These fact allow to apply the Stone-Weierstrass theorem \cite{Debranges-59}, which implies
  $C_0(\RR,\CC)\subseteq\cS$.
  Now the claim follows from Portmanteau's theorem.
  Note that the idea to use the resolvents
  instead of polynomials as test-functions was proposed for instance in
  \cite[Section~3]{CyconFKS-08}.
  \par
  We therefore only have to show \eqref{RinS}.
  Fix $z\in\CC\setminus\RR$.  
  We extend the graph isomorphism form~\eqref{psiso} injectively to
  \begin{equation*}
    \psi'_{x,r}\from V_r\to G\text,
  \end{equation*}
  where $x\in V_r^{(0)}$.
  This map induces a projection
  \begin{equation*}
    \phi_{x,r}\from\ell^2(G)\to\ell^2(V_r)\textq,
    \phi_{x,r}(f):=f\circ\psi'_{x,r}\text.
  \end{equation*}
  We use this to transport~$A_r$ to~$\ell^2(G)$ from the point of view of~$x$:
  \begin{equation*}
    \hat A_{r,x}:=\phi_{x,r}^*A_r\phi_{x,r}\from\ell^2(G)\to\ell^2(G)
  \end{equation*}
  and set $\hat a_{r,x}(g,h):=\spr{\delta_g}{\hat A_{r,x}\delta_h}$ for $g,h\in G$.
  This operator can be interpreted as~$A_r$
  filled up with zero rows and columns.
  Therefore it obeys a block structure.
  This and $\phi_{r,x}^*\phi_{r,x}=\id_{\ell^2(G)}$
  ensures the validity of the following calculation:
  \begin{align*}
    \spr{\delta_x}{(z-A_r)^{-1}\delta_x}&
      =\spr{\delta_x}{(z-\phi_{r,x}\hat A_{r,x}\phi_{r,x}^*)^{-1}\delta_x}\\&
      =\spr{\delta_x}{\phi_{r,x}(z-\hat A_{r,x})^{-1}\phi_{r,x}^*\delta_x}\\&
      =\spr{\phi_{r,x}^*\delta_x}{(z-\hat A_{r,x})^{-1}\phi_{r,x}^*\delta_x}
      =\spr{\delta_{\id}}{(z-\hat A_{r,x})^{-1}\delta_{\id}}\text.
  \end{align*}
  On the other hand by the spectral theorem we have
  \begin{align*}
    \int_\RR(z-x)^{-1}dN(x)&
      =\spr{\delta_{\id}}{(z-A)^{-1}\delta_{\id}}\text.
  \end{align*}
  We estimate the difference
  \begin{equation*}
    D_r:=\Bigabs{\int_\RR(z-x)^{-1}dN_r(x)-\int_\RR(z-x)^{-1}dN(x)}\text.
  \end{equation*}
  Using
  \begin{align}\label{trace}
    \int_\RR(z-x)^{-1}dN_r(x)&
      =\frac1{\setsize{V_r}}\sum_{\lambda\in\sigma(A_r)}(z-\lambda)^{-1}
      =\frac1{\setsize{V_r}}\sum_{\lambda\in\sigma((z-A_r)^{-1})}\lambda\\&
      =\frac1{\setsize{V_r}}\Tr((z-A_r)^{-1})
      =\frac1{\setsize{V_r}}\sum_{x\in V_r}\spr{\delta_x}{(z-A_r)^{-1}\delta_x}
      \nonumber
  \end{align}
  and the spectral theorem for the second term, we obtain
  \begin{align*}
    D_r&
    =\Bigabs{
      \frac1{\setsize{V_r}}\sum_{x\in V_r}
        \spr{\delta_x}{(z-A_r)^{-1}\delta_x}
        -\spr{\delta_{\id}}{(z-A)^{-1}\delta_{\id}}
      }\\&
    \le\frac1{\setsize{V_r}}\sum_{x\in V_r^{(0)}}
      \abs{\spr{\delta_{\id}}{(z-\hat A_{r,x})^{-1}\delta_{\id}}
        -\spr{\delta_{\id}}{(z-A)^{-1}\delta_{\id}}}
      \\&\quad
      +\frac1{\setsize{V_r}}\sum_{x\in V_r\setminus V_r^{(0)}}
      \abs{\spr{\delta_x}{(z-A_r)^{-1}\delta_x}
        -\spr{\delta_{\id}}{(z-A)^{-1}\delta_{\id}}}\\&
    \le\sup_{x\in V_r^{(0)}}
      \abs{\spr{\delta_{\id}}
               {\bigl((z-\hat A_{r,x})^{-1}-(z-A)^{-1}\bigr)\delta_{\id}}
          }
      +\frac{2\epsilon(r)}{\abs{\Im z}}\text.
  \end{align*}
  Here we used Cauchy-Schwarz inequality,
  $\norm{(z-A)^{-1}}\le\abs{\Im z}^{-1}$ and \cref{S2}.
  Now we insert $\psi\in\ell^2(G)$
  for later optimization and use the second resolvent identity:
  \begin{align}
    D_r&
    \le\sup_{x\in V_r^{(0)}}
      \abs{\spr{\delta_{\id}}
               {\bigl((z-\hat A_{r,x})^{-1}-(z-A)^{-1}\bigr)\psi}
          }
      +\frac{2(\epsilon(r)+\norm[2]{\delta_{\id}-\psi})}{\abs{\Im z}}\nonumber\\&
    \le\sup_{x\in V_r^{(0)}}
      \abs{\spr{\delta_{\id}}
               {(z-\hat A_{r,x})^{-1}(A-\hat A_{r,x})(z-A)^{-1}\psi}
          }
          \nonumber\\&\qquad{}
      +\frac{2(\epsilon(r)+\norm[2]{\delta_{\id}-\psi})}{\abs{\Im z}}
      \nonumber\\&
    \le\frac1{\abs{\Im z}}\sup_{x\in V_r^{(0)}}\norm[2]{(A-\hat A_{r,x})(z-A)^{-1}\psi}
      +\frac{2(\epsilon(r)+\norm[2]{\delta_{\id}-\psi})}{\abs{\Im z}}\text.
    \label{detDr}
  \end{align}
  Now we choose $\psi$ in an appropriate way. Let $\kappa>0$ be arbitrary.
  Then \cref{la:compt} gives that there exists $\psi\in\ell^2(G)$ such that
\[
 \norm[2]{\delta_{\id}-\psi}<\kappa\quad\text{and}\quad \phi:=(z-A)^{-1}\psi\in D_0.
\]
  For all $r\ge3\diam(\supp\phi)$, we continue to estimate,
  using the properties of the approximation~$\hat A_{r,x}$, $x\in V_r^{(0)}$
  and the triangle inequality:
  \begin{align*}&\phantom{{}={}}
    \norm[2]{(A-\hat A_{r,x})\phi}
      =\biggl(\sum_{g\in G\setminus B_{r/3}^G}\Bigabs{\sum_{h\in\spt\phi}
        \spr{(A-\hat A_{r,x})\delta_g}{\delta_h}\phi(h)}^2\biggr)^{1/2}\\&
      \le\norm[\infty]\phi\setsize{\supp\phi}^{1/2}
        \biggl(\sum_{g\in G\setminus B_{r/3}^G}
        \sum_{h\in\spt\phi}\abs{a(g,h)-\hat a_{r,x}(g,h)}^2\biggr)^{1/2}\\&
      \le\norm[\infty]\phi\setsize{\supp\phi}^{1/2}
          \biggl(\sum_{h\in\spt\phi}\sum_{g\in G\setminus B_{r/3}^G}
          \abs{a(g,h)}^2\biggr)^{1/2}\\&\quad
        +\norm[\infty]\phi\setsize{\supp\phi}^{1/2}
          \biggl(\sum_{h\in\spt\phi}\sum_{g\in G\setminus B_{r/3}^G}
          \abs{\hat a_{r,x}(g,h)}^2\biggr)^{1/2}
        \text.
  \end{align*}
  By \cref{remark/3}, $\hat a_{r,x}(g,h)\ne0$
  with $h\in\supp\phi\subseteq B_{r/3}$ implies $g\in B_r^G$.
  Therefore
  \begin{equation}\label{eq:r/3}
    \begin{split}&\phantom{{}={}}
    \sum_{h\in\spt\phi}\sum_{g\in G\setminus B_{r/3}^G}
          \abs{\hat a_{r,x}(g,h)}^2\\&
    =\sum_{z\in\psi_{x,r}^{-1}(\spt\phi)}\sum_{y\in B_r^{V_r}\setminus B_{r/3}^{V_r}}
          \abs{a_r(y,z)}^2\\&
    \le\sum_{z\in\psi_{x,r}^{-1}(\spt\phi)}\sum_{y\in B_r^{V_r}\setminus B_{r/3}^{V_r}}
          \abs{a(\psi_{x,r}(y),\psi_{x,r}(z))}^2\\&
    \le\sum_{h\in\spt\phi}\sum_{g\in G\setminus B_{r/3}^G}\abs{a(g,h)}^2\text.
    \end{split}
  \end{equation}
  These considerations lead to
  \begin{align*}
    \norm[2]{(A-\hat A_{r,x})\phi}&
      \le2\norm[\infty]\phi\setsize{\supp\phi}^{1/2}
          \biggl(\sum_{h\in\spt\phi}\sum_{g\in G\setminus B_{r/3}^G}
          \abs{a(g,h)}^2\biggr)^{1/2}
        \text.
  \end{align*}
  By \cref{core} and \eqref{detDr},
  the difference~$D_r$ is bounded by $2(\epsilon(r)+\kappa)/\abs{\Im z}$,
  which implies
  \begin{equation*}
    \lim_{r\to\infty}\int_\RR(z-x)^{-1}dN_r(x)
      =\int_\RR(z-x)^{-1}dN(x)\text,
  \end{equation*}
  as $\kappa$ was chosen arbitrary.
\end{proof}

\section{Approximation results in the random setting}\label{secrandom}

We introduce random operators in \cref{countable}
and study the existence of their integrated density of states.
Proceeding in two steps, we first show convergence results in mean,
see \cref{mean},
and improve these to almost sure convergence in \cref{almostsure}. 
Beside this we again obtain a Pastur-Shubin-trace formula.

\subsection{Random operators on countable groups}\label{countable}
Let $G$ be a countable group.
Let $(\Omega,\cA,\PP)$ be a probability space and let
\begin{equation}\label{sym}
  \bigl\{a(x,y)=\cc{a(y,x)}\from\Omega\to\CC\bigm|x,y\in G\bigr\}
\end{equation}
be a set of random variables on~$(\Omega,\cA,\PP)$,
such that for each~$z\in G$ the random vectors $\bigl(a(x,y)\bigr)_{x,y\in G}$
and $\bigl(a(xz,yz)\bigr)_{x,y\in G}$ are identically distributed.
This means that for all finite $F\subseteq G$ and $E\in\cB(\CC^F)$
\begin{equation}\label{train}
  \PP\bigl\{\bigl(a(x,y)\bigr)_{x,y\in F}\in E\bigr\}
    =\PP\bigl\{\bigl(a(xz,yz)\bigr)_{x,y\in F}\in E\bigr\}\text.
\end{equation}
We assume further
\begin{equation}\label{summable}
  \EE\Bigl[\Bigl(\sum_{x\in G}\abs{a(x,\id)}\Bigr)^2\Bigr]
    <\infty\text.
\end{equation}
Note that \eqref{train} and \eqref{summable}
are suitable adaptions of \cref{transl,core} for the randomized setting.

Denote as before the set of complex valued and compactly supported functions on~$G$ by $D_0=\bigl\{f\in\ell^2(G)\bigm|\setsize{\supp f}<\infty\bigr\}$.
\begin{Lemma}\label{owe}
  For almost all $\omega\in\Omega$,
  \emph{matrix operator}~$\tilde A^{(\omega)}$, acting on~$D_0$ via
  \begin{equation}\label{deftildeA}
    (\tilde A^{(\omega)} f)(x):=\sum_{y\in G}a^{(\omega)}(x,y)f(y)
    \quad(x\in G)\text,
  \end{equation}
  is well defined.
  The family $\tilde A:=(\tilde A^{(\omega)})_{\omega\in\Omega}$
  is a random operator and satisfies for each $x\in G$
  \begin{equation}\label{approximable}
    \EE\bigl[\norm[2]{\tilde A\delta_x}^2\bigr]
      =\EE\bigl[\norm[2]{\tilde A\delta_{\id}}^2\bigr]
      \le\EE\bigl[\norm[1]{\tilde A\delta_{\id}}^2\bigr]
      <\infty\text.
  \end{equation}
\end{Lemma}
\begin{proof}
  According to \cite{PasturFigotin1992},
  an operator valued function $\tilde A\from\omega\mapsto\tilde A^{(\omega)}$
  with a common core~$D_0$ is measurable, if the functions
  \begin{equation*}
    \omega\mapsto\spr v{\tilde A^{(\omega)}w}
      =\lim_{r\to\infty}\sum_{x\in B_r^G(\id)}v_x
        \sum_{y\in\supp w}a^{(\omega)}(x,y)w_y
  \end{equation*}
  are measurable for all $v\in\ell^2(G)$ and $w\in D_0$.
  Since limits of sum of random variables is again measurable,
  $\tilde A$ is measurable.
  \par
  \Cref{approximable} follows directly from~\eqref{train} and~\eqref{summable}:
  \begin{align*}
    \EE\bigl[\norm[2]{\tilde A\delta_x}^2\bigr]&
      \le\EE\bigl[\norm[1]{\tilde A\delta_x}^2\bigr]
      =\EE\Bigl[\Bigl(\sum_{z\in G}\abs{a(z,\id)}\Bigr)^2\Bigr]
      <\infty\text.
  \end{align*}
  A direct consequence is $\PP\{\norm[2]{\tilde A\delta_x}=\infty\}=0$
  for all $x\in G$,
  which implies that $\tilde A$~is $\PP$-a.s.\ well defined on~$D_0$.
\end{proof}

By \eqref{train} and \eqref{summable},
the random operator $\tilde A$ has the following properties.
For all $f,g\in D_0$, we have
\begin{enumerate}[({R}1)]
  \item\label[rassum]{rtrain}
    For all $x\in G$ and the corresponding translations
    $\tau_x\from\ell^2(G)\to\ell^2(G)$, $\tau_xf(y):=f(yx^{-1})$
    holds
    \begin{equation*}
      \EE\bigl[\spr{\tilde A\tau_xf}{\tau_xg}\bigr]
        =\EE\bigl[\spr{\tilde Af}g\bigr]\text,
    \end{equation*}
  \item\label[rassum]{rapprox}
    $\EE\bigl[\norm[1]{\tilde Af}^2\bigr]<\infty$ and
  \item\label[rassum]{rsymm}
    $\spr{\tilde A^{(\omega)}f}g=\spr f{\tilde A^{(\omega)}g}$
      for all $\omega\in\Omega$.
\end{enumerate}

Since our operators are up to now only defined on~$D_0$,
we need the multiplication operators $\mo F\from\ell^2(G)\to D_0$,
$\mo F(f):=\ifu Ff$ with the indicator functions $\ifu F$
of finite sets~$F\subseteq G$.
The following lemma is adapted from \cite[Proposition~4.1]{PasturFigotin1992}.
\begin{Lemma}\label{figotinsTrick}
  Let the random operators $A,B\from D_0\to\ell^2(G)$
  satisfy the properties~\labelcref{rtrain} and~\labelcref{rapprox},
  with joint translational invariance.
  Then, for all $x\in G$,
  \begin{equation*}
    \EE\bigl[\norm[2]{A\pi_{B_r^G}B\delta_x}^2\bigr]
      \le\norm[\infty]B^2\EE\bigl[\norm[1]{A\delta_{\id}}^2\bigr]\text,
  \end{equation*}
  where $\norm[\infty]{B}$ is the $\ell^\infty$-norm of the random variable
  $\omega\mapsto\norm{B^{(\omega)}}$ with operator norm.
\end{Lemma}
The operators $A$ and $B$ are \emph{jointly translationally invariant}, if
\begin{equation*}
  \PP\bigl\{\bigl(a(x,y),b(x,y)\bigr)_{x,y\in F}\in E\bigr\}
    =\PP\bigl\{\bigl(a(xz,yz),b(xz,yz)\bigr)_{x,y\in F}\in E\bigr\}
\end{equation*}
for all $z\in G$, $F\subseteq G$ finite and $E\in\cB(\CC^F\times\CC^F)$.
\begin{proof}
  Let $a(x,y):=\spr{\delta_x}{A\delta_y}$ and
  $b(x,y):=\spr{\delta_x}{B\delta_y}$, $x,y\in G$,
  be the matrix elements of~$A$ and~$B$.
  Then with Parseval's identity we get
  \begin{align*}
    \EE\bigl[\norm[2]{A\pi_{B_r^G}B\delta_x}^2\bigr]&
      =\EE\bigl[\spr{A\pi_{B_r^G}B\delta_x}{A\pi_{B_r^G}B\delta_x}\bigr]\\&
      \le\sum\nolimits_{y,z\in G}\EE
        \bigl[\abs{\spr{A\delta_y}{A\delta_z}b(y,x)b(x,z)}\bigr]\\&
      =\sum\nolimits_{y,z}\EE\bigl[\abs{\spr{A\delta_{yz^{-1}}}{A\delta_{\id}}
        b(yz^{-1},xz^{-1})b(xz^{-1},\id)}\bigr]\text,
  \intertext{where we used joint translational invariance of $A$ and $B$.
    We reindex the sums to rearrange the terms
    and use the Cauchy Schwarz inequality:}
    \dots&
      =\sum\nolimits_{y',z}\EE\bigl[\abs{\spr{A\delta_{y'}}{A\delta_{\id}}
        b(y',xz^{-1})b(xz^{-1},\id)}\bigr]\\&
      =\sum\nolimits_{y'}\EE\bigl[\abs{\spr{A\delta_{y'}}{A\delta_{\id}}}
        \sum\nolimits_{z'}\abs{b(y',z')b(z',\id)}\bigr]\\&
      \le\sum\nolimits_{y'}\EE\bigl[\abs{\spr{A\delta_{y'}}{A\delta_{\id}}}
        \norm[2]{B\delta_{y'}}\norm[2]{B\delta_{\id}}\bigr]\\&
      \le\norm[\infty]B^2\sum\nolimits_{y'}
        \EE\bigl[\abs{\spr{A\delta_{y'}}{A\delta_{\id}}}\bigr]\text.
  \intertext{Now, as $B$ is out of the way, we expand the scalar product
    and use translation invariance again and reindex the sums once more:}
    \dots&
      =\norm[\infty]B^2\sum\nolimits_{y'}\EE\Bigl[
        \Bigabs{\sum\nolimits_{z}{a(z,y')}{a(z,\id)}}\Bigr]\\&
      =\norm[\infty]B^2\sum\nolimits_{y'}\EE\Bigl[
        \Bigabs{\sum\nolimits_{z}a(\id,y'z^{-1})a(\id,z^{-1})}\Bigr]\\&
      \le\norm[\infty]B^2\sum\nolimits_{y',z}\EE\bigl[
        \abs{a(\id,y'z^{-1})a(\id,z^{-1})}\bigr]\\&
      =\norm[\infty]B^2\EE\Bigl[\sum\nolimits_{y''}\abs{a(\id,y'')}
        \sum\nolimits_{z'}\abs{a(\id,z')}\Bigr]\\&
      =\norm[\infty]B^2\EE\Bigl[\Bigl(\sum\nolimits_y
        \abs{a(\id,y)}\Bigr)^2\Bigr]
      =\norm[\infty]B^2\EE\bigl[\norm[1]{A\delta_{\id}}^2\bigr]\text.
      \qedhere
  \end{align*}
\end{proof}

\begin{Proposition}\label{esa}
  There exists $\tilde\Omega\in\cA$ of full measure such that for all $\omega\in\tilde\Omega$
  the operator~$\tilde A^{(\omega)}$ defined in~\eqref{deftildeA}
  is essentially self-adjoint.
  Denote the self-adjoint extension of $\tilde A^{(\omega)}$ by $\bar A^{(\omega)}$ and define the random operator $A=(A^{(\omega)})$ for all $\omega\in\Omega$ by
\[
 A^{(\omega)}:=\begin{cases}
              \bar A^{(\omega)}&\text{ if }\omega\in\tilde\Omega \\
              {\rm Id} &\text{ else.}
             \end{cases}
\]
  Then the resolvents $\omega\mapsto(z-A^{(\omega)})^{-1}$,
  $z\in\CC\setminus\RR$, are strongly measurable.
\end{Proposition}
\begin{proof}
  The proof of \cite[(4.2)~Theorem]{PasturFigotin1992}
  for essential self-adjointness generalizes to our more general setting.
  \par
  The operators are symmetric, see \cref{rsymm}.
  By the basic criterion for self-adjointness,
  \cite[Theorem VIII.3, p.~256f]{ReedSimon:1},
  we have to show that, with probability~$1$, $(z-\tilde A^{(\omega)})D_0$
  is dense in~$\ell^2(G)$ for all $z\in\CC\setminus\RR$.
  For this, it suffices to approximate~$\delta_g$ for arbitrary $g\in G$.
  \par
  Define for $r\in\NN$ the bounded random matrix operator~$\tilde A_r^{(\omega)}$,
  acting on~$D_0$, via its matrix elements
  \begin{equation*}
    \tilde a_r^{(\omega)}(x,y)
      :=\ifu{[0,r]}\bigl(\abs{a^{(\omega)}(x,y)}\bigr)
        \ifu{B_r^G(x)}(y)a^{(\omega)}(x,y)\text.
  \end{equation*}
  Using Lebesgue's dominated convergence theorem,
  continuity of squaring and monotone convergence,
  we see that the operators~$\tilde A_r^{(\omega)}$
  approximate~$\tilde A^{(\omega)}$ in the following sense:
  \begin{align*}
    \lim_{r\to\infty}\EE\bigl[\norm[1]{(\tilde A-\tilde A_r)\delta_{\id}}^2\bigr]&
      =\lim_{r\to\infty}\EE\Bigl[\Bigl(\sum_{g\in G}\abs{a(g,\id)-\tilde a_r(g,\id)}\Bigr)^2\Bigr]\\&
      =\EE\Bigl[\Bigr(\sum_{g\in G}\lim_{r\to\infty}\abs{a(g,\id)-\tilde a_r(g,\id)}\Bigr)^2\Bigr]
      =0\text,
  \end{align*}
  since 
  \[
\sum_{g\in G}\abs{a^{(\omega)}(g,\id)-\tilde a_r^{(\omega)}(g,\id)}
    \le2\sum_{g\in G}\abs{a^{(\omega)}(g,\id)}
    =2\norm[1]{\tilde A^{(\omega)}\delta_{\id}}   
  \]
  uniformly in $r\in\NN$ and
  $\EE\bigl[\norm[1]{2\tilde A\delta_{\id}}^2\bigr]<\infty$.
  Consider for $r,n\in\NN$ the element
  $f_{g,r,n}^{(\omega)}
    :=\mo{B_n^G(\id)}(z-\tilde A_r^{(\omega)})^{-1}\delta_g\in D_0$.
  We will show, that its image under $z-\tilde A^{(\omega)}$
  converges to~$\delta_g$ for a suitable limit in $n$ and $r$.
  Therefore, we estimate
  \begin{equation}\label{convergeceestimate}
  \begin{split}&\phantom{{}={}}
    \norm[2]{(z-\tilde A^{(\omega)})f_{g,r,n}^{(\omega)}-\delta_g}\\&
      =\bignorm[2]{(z-\tilde A^{(\omega)})\mo{B_n^G}(z-\tilde A_r^{(\omega)})^{-1}\delta_g\\&\qquad\quad
        -(z-\tilde A_r^{(\omega)})(\mo{B_n^G}+\mo{G\setminus B_n^G})(z-\tilde A_r^{(\omega)})^{-1}\delta_g}
          \\&
      \le\norm[2]{(\tilde A_r^{(\omega)}-\tilde A^{(\omega)})f_{g,r,n}^{(\omega)}}
        +C(r)\norm[2]{\mo{G\setminus B_n^G}(z-\tilde A_r^{(\omega)})^{-1}\delta_g}
        \qtext{a.s.}
  \end{split}
  \end{equation}
  with $C(r):=\sup_{\omega\in\Omega}\norm[2]{z-\tilde A_r^{(\omega)}}<\infty$.
  Note $\lim_{n\to\infty}\norm[2]{\mo{G\setminus B_n^G}
      (z-\tilde A_r^{(\omega)})^{-1}\delta_g}=0$ and
  \begin{equation*}
    \norm[2]{\mo{G\setminus B_n^G}
      (z-\tilde A_r^{(\omega)})^{-1}\delta_g}\le\abs{\Im z}^{-1}
  \end{equation*}
  uniformly in $n\in\NN$.
  By Lebesgue, there exists for all $r>0$ an $\tilde n=\tilde n(r,g)\in\NN$
  such that
  \begin{equation}\label{part2}
    C(r)\EE\bigl[\norm[2]{\mo{G\setminus B_{\tilde n}^G}
      (z-\tilde A_r)^{-1}\delta_g}\bigr]
      \le1/r\text.
  \end{equation}
  Also in expectation, the first summand in
  \eqref{convergeceestimate} can be controlled by \cref{figotinsTrick}
  and $\norm{(z-\tilde A_r^{(\omega)})^{-1}}\le\abs{\Im z}^{-1}$ with the bound
  \begin{align*}
    \bigl(\EE[\norm[2]{(\tilde A_r-\tilde A)f_{g,r,\tilde n}}]\bigr)^2&
      \le\EE\bigl[\norm[2]{(\tilde A_r-\tilde A)f_{g,r,\tilde n}}^2\bigr]\\&
      \le\norm[\infty]{(z-\tilde A_r)^{-1}}^2
        \EE\bigl[\norm[1]{(\tilde A_r-\tilde A)\delta_{\id}}^2\bigr]\\&
      \le\abs{\Im z}^{-2}\EE\bigl[\norm[1]{(\tilde A_r-\tilde A)\delta_{\id}}^2\bigr]
      \xto{r\to\infty}0\text.
  \end{align*}
  Together with~\eqref{part2} we infer from~%
  \eqref{convergeceestimate}
  \begin{equation*}
    \EE\bigl[\norm[2]{(z-\tilde A)f_{g,r,\tilde n}-\delta_g}\bigr]
      \xto{r\to\infty}0\text,
  \end{equation*}
  i.e.\ convergence in~$L^1$.
  Thereby we find a subsequence $(r_k)_{k\in\NN}$ and a set $\tilde \Omega\subseteq \Omega$ of full measure, such that for all $\omega\in\tilde\Omega$
  \begin{equation*}
    \lim_{k\to\infty}\norm[2]{(z-\tilde A^{(\omega)})
      f_{g,r_k,\tilde n(r_k,g)}^{(\omega)}-\delta_g}=0\text,
  \end{equation*}
  and essential self-adjointness of~$\tilde A^{(\omega)}$ is shown.
  \par
  For the measurability statement fix $z\in\CC\setminus\RR$
  and denote for $\omega\in\Omega$ by~$A_r^{(\omega)}\from\ell^2(G)\to\ell^2(G)$
  the self-adjoint operator with matrix elements
  \begin{equation*}
    a_r^{(\omega)}(x,y)
      :=\begin{cases}\ifu{[0,r]}\bigl(\abs{a^{(\omega)}(x,y)}\bigr)
        \ifu{B_r^G(\id)^2}(x,y)a^{(\omega)}(x,y) &\text{if }\omega\in\tilde\Omega\\
        \delta_x(y)\ifu{B_r^G(\id)^2}(x,y) &\text{else.}
\end{cases}
  \end{equation*}
  Note that this is operator is not translationally invariant in distribution
  and not the closure of~$\tilde A_r^{(\omega)}$
  from above, but has only finitely many non-zero matrix elements.
  \par
  By Cramer's rule, the resolvent $\omega\mapsto(z-A_r^{(\omega)})^{-1}$
  is weakly measurable.
  Of course, $\ell^2(G)$ is separable, so by Pettis' measurability theorem
  the resolvent is actually strongly measurable.
  We will now show that the resolvents of~$A_r$
  converge strongly to the corresponding resolvents of~$A$.
  This will show measurability of~$A$.
  \par
  Since $A^{(\omega)}$ and $A_r^{(\omega)}$ are self-adjoint,
  $\norm{(z-A^{(\omega)})^{-1}}\le1/\abs{\Im z}$
  and analogously for $A_r^{(\omega)}$.
  Therefore, fix some $\omega\in\Omega$, $\xi\in\ell^2(G)$ and $\kappa>0$.
  By \cref{la:compt} there exists $\psi\in\ell^2(G)$ with
\[
 \norm[2]{\xi-\psi}<\kappa\quad\text{and}\quad \phi:=(z-A^{(\omega)})^{-1}\psi\in D_0.
\]
  Equipped with these tools and the second resolvent identity we see
  \begin{align*}&\phantom{{}={}}
    \bignorm[2]{\bigl((z-A^{(\omega)})^{-1}-(z-A_r^{(\omega)})^{-1}\bigr)\xi}\\&
      \le\bignorm[2]{\bigl((z-A^{(\omega)})^{-1}-(z-A_r^{(\omega)})^{-1}\bigr)\psi}
        +2\norm[2]{\xi-\psi}/\abs{\Im z}\\&
      \le\norm[2]{(z-A_r^{(\omega)})^{-1}(A^{(\omega)}-A_r^{(\omega)})(z-A^{(\omega)})^{-1}\psi}
        +2\kappa/\abs{\Im z}\\&
      \le\bigl(\norm[2]{(A^{(\omega)}-A_r^{(\omega)})\phi}
        +2\kappa\bigr)/\abs{\Im z}.
  \end{align*}
  Now we use that for all $x,y\in G$
  \begin{equation*}
    a_r^{(\omega)}(x,y)\xto{r\to\infty}a^{(\omega)}(x,y)
  \end{equation*}
  to obtain
  \begin{equation*}
    \limsup_{r\to\infty}\bignorm[2]{\bigl((z-A^{(\omega)})^{-1}
      -(z-A_r^{(\omega)})^{-1}\bigr)\xi}
      \le2\kappa/\abs{\Im z}.
  \end{equation*}
  As $\kappa>0$ was arbitrary, this concludes the proof.
\end{proof}

Motivated by the examples in \cref{exa},
we are interested in a special case of matrix operators,
which we will define now.

Let $\cP_{1,2}:=\{e\subseteq G\mid\setsize e\in\{1,2\}\}$
be the set of all edges of the full graph with vertices~$G$,
and let further $X_e\from\Omega\to\RR$, $e\in\cP_{1,2}$,
be independent random variables such that for each $g\in G$
the random variables in
\begin{equation}\label{id_distr}
  \{X_{\{x,y\}}\mid x,y\in G,xy^{-1}=g\}
\end{equation}
are identically distributed.
We require further
\begin{equation}\label{momentsOfX}
  \EE\Bigl[\Bigl(\sum_{x\in G}\abs{X_{\{\id,x\}}}\Bigr)^2\Bigr]
    <\infty\text.
\end{equation}
The matrix element of~$\tilde A\from D_0\to\ell^2(G)$
for $x,y\in G$ is parametrized by $\alpha\in\RR$ and given by
\begin{equation}\label{laplaceelements}
  a(x,y)
  :=a_{\alpha}(x,y)
  :=X_{\{x,y\}}-\alpha\delta_x(y)\sum\limits_{z\in G\setminus\{x\}}X_{\{x,z\}}
\end{equation}
Since the moment condition~\eqref{momentsOfX} implies \eqref{summable}
and
\begin{align*}
  \EE\bigl[\norm[1]{\tilde A\delta_{\id}}^2\bigr]&
    =\EE\Bigl[\Bigl(\sum_{x\in G}\abs{a(x,\id)}\Bigr)^2\Bigr]
    \le\EE\Bigl[\Bigl(\sum_{x\in G}\abs{X_{\{\id,x\}}}
      +\alpha\sum\limits_{z\in G}\abs{X_{\{\id,z\}}}\Bigr)^2\Bigr]\\&
    =(1+\alpha)^2\EE\Bigl[\Bigl(\sum_{x\in G}\abs{X_{\{\id,x\}}}\Bigr)^2\Bigr]
    <\infty\text,
\end{align*}
the matrix operator $\tilde A$, acting on $D_0$ via~\eqref{deftildeA},
is well-defined and almost surely essentially self-adjoint, see \cref{esa}.
The operator~$A^{(\omega)}$ given as in \cref{esa},
which is for almost all $\omega$ the closure of $\tilde A^{(\omega)}$,
is self-adjoint for all $\omega$ and by construction and \eqref{summable} fulfills
\cref{rtrain,rapprox}.

In the case $\alpha=0$ the operator is an adjacency matrix
on graphs with vertices in~$G$ and weights on the edges.
For $\alpha=1$ and $X_{\lbrace x\rbrace}=0$
a.s.\ we cover random weighted Laplace operators on such graphs.
More generally, one interprets the diagonal terms $X_{\lbrace x\rbrace}$ as random potential. This setting is well studied under the term \emph{Anderson model}.
For all these reasons we call the operators~$A^{(\omega)}$
\emph{random Hamiltonians}.

The next well-known lemma gives conditions for boundedness and unboundedness of the operators in consideration.
\begin{Lemma}\label{la:bdd}
  Let $A$ be the random Hamiltonian defined above
  with the random variables $X_{\lbrace x,y\rbrace}$, $x,y\in G$, and
  $D:=\sup_{x\in G}\norm[\infty]{X_{\lbrace\id,x\rbrace}}\in[0,\infty]$.
\begin{enumerate}[(i)]
  \item If $D=\infty$, then $A$ is unbounded.
  \item If $D<\infty$ and $A$ is of finite hopping range,
    i.e.\ $X_{\lbrace x,y\rbrace}=0$ whenever $d(x,y)\ge R$
    for some fixed $R\in\NN$, then $A$ is bounded.
\end{enumerate}
\end{Lemma}
\begin{proof}
  Let $D=\infty$ and let $m>0$ be given. Note that condition \eqref{momentsOfX} implies that
\[
 k:=\EE\Big(\sum_{z\neq \id}\abs{X_{\{\id,z\}}}\Big)<\infty.
\]
 Without loss of generality we assume $m\geq 2k\abs{\alpha}$. As $D$ is assumed to be infinite there exists $z\in G$ such that 
\begin{equation}\label{eq:la:bdd:1}
\norm[\infty]{X_{\{\id,z\}}} \geq m. 
\end{equation}
Let us distinguish two case. In the first case we consider the situation where there exists $z\in G\setminus\{\id\}$ satisfying \eqref{eq:la:bdd:1}. Then obviously the probability $\PP(a(\id,z)\geq m)$ is strictly positive.
In the case where there exists no $z\in G\setminus\{\id\}$ satisfying \eqref{eq:la:bdd:1} the same holds true, however we need a short calculation to see this. In this situation we have $\norm[\infty]{X_{\{\id\}}}=\infty$. By definition of $a(\id,\id)$ we have by triangle inequality
\begin{align*}
 \PP(\abs{a(\id,\id)}\geq m)
&\geq \PP\bigg( \abs{X_{\{\id\}}}- \Bigabs{\alpha \sum_{z\in G\setminus\{\id\}} X_{\{\id, z\}}} \geq m \bigg) \\
&\geq \PP\bigg( \abs{X_{\{\id\}}}\geq 2m, \  \Bigabs{\alpha \sum_{z\in G\setminus\{\id\}} X_{\{\id, z\}}} \leq m \bigg)\\
&= \PP\big( \abs{X_{\{\id\}}}\geq 2m\big) \PP \bigg( \Bigabs{\alpha \sum_{z\in G\setminus\{\id\}} X_{\{\id, z\}}} \leq m \bigg)
\end{align*}
As $\norm[\infty]{X_{\{\id\}}}=\infty$ we get $\PP(\abs{X_{\{\id\}}}\geq 2m)>0$. We use Tschebyscheff inequality to obtain
\begin{align*}
 \PP \bigg( \Bigabs{\alpha \sum_{z\in G\setminus\{\id\}} X_{\{\id, z\}}} \leq m \bigg)\geq 1-\frac{\abs{\alpha}}{m}\EE\bigg(\sum_{z\in G\setminus\{\id\}} X_{\{\id,z\}} \bigg)\geq \frac12
\end{align*}
This gives $\PP(\abs{a(\id,\id)}\geq m)>0$. Together with the previous case we showed that whenever $D=\infty$ there exists $z\in G$ such that $\PP(\abs{a(\id,z)}>m)$ is positive.
Furthermore, by construction  we have that the random variables $a(x,zx)$, $x\in G$ are independent and identical distributed, such that we get
  \[
    \sum_{x\in G}\PP(\abs{a(x,zx)}>m)
      =\infty\text.
  \]
  Now Borel-Cantelli gives that for almost all $\omega\in\Omega$
  there are infinitely many~$x\in G$ such that $\abs{a^{(\omega)}(x,zx)}>m$.
  For each such $\omega$, we choose one of these~$x$ and obtain
  $(A^{(\omega)}\delta_{zx})(x)=a^{(\omega)}(x,zx)$.
  Hence 
  \begin{equation*}
    \norm{A^{(\omega)}}
      \ge\norm[2]{A^{(\omega)}\delta_{zx}}
      \ge m\text.
  \end{equation*}

  Let $D<\infty$ and~$A$ be of finite hopping range~$R$.
  We set $m:=(1+\abs{\alpha}\setsize{B_R})D$. Then we have
\begin{align*}
  \PP(\exists x,y\in G\text{ with }a(x,y)\ge m)&
    =\PP\Bigl(\Union_{x,y\in G}\bigl\{\omega\in\Omega\mid a(x,y)\ge m\bigr\}\Bigr)\\&
    \le\sum_{x,y\in G}\PP\bigl(\bigl\{\omega\in\Omega\mid a(x,y)\ge m\bigr\}\bigr)
    =0\text.
\end{align*}
Using this we get for $f\in\ell^2(G)$ and almost all realizations $\omega\in\Omega$
\begin{align*}
    \norm[2]{A^{(\omega)}f}^2&
      =\sum_{v\in G}\Bigabs{\sum_{w\in B_R^G(v)} a^{(\omega)}(v,w)f(w)}^2
      \le \sum_{v\in G} m^2 \Bigabs{\sum_{w\in B_R^G(v)} \abs{f(w)}}^2 \\&
      \le\sum_{v\in G} m^2\setsize{B_R^G}\sum_{w\in B_R^G(v)}\abs{f(w)}^2 
      \le m^2\abs{B_R^G}^2\norm[2]f^2\text.\qedhere
  \end{align*}
\end{proof}

\subsection{Convergence in mean}\label{mean}
From now on we restrict ourselves to infinite sofic groups~$G$,
generated by a finite and symmetric generating system~$S$.
Furthermore, the operators in consideration are random Hamiltonians~%
$A=(A^{(\omega)})_{\omega\in\Omega}$
as defined right before Lemma \ref{la:bdd}.

The first step is to define finite dimensional approximations to~$A$.
We consider the approximating graphs $\Gamma_r$, $r\in\NN$,
and use the simplified notation~\eqref{eq:simplify}.
Recall the map~$\psi_{x,r}$ from~\eqref{psiso}.
For the construction of the approximating operators~$A_r$, $r\in\NN$,
we choose enumerations of
$V_r^{(0)}=\{v_1,\dotsc,v_{k_r}\}$, $k_r:=\setsize{V_r^{(0)}}$.
We set for
$e'\in\cP_{1,2}^{(r)}:=\{e'\subseteq V_r\mid\setsize{e'}\in\{1,2\}\}$
\begin{equation*}
  j_r(e')=\max\{0,j\in\{1,\dotsc,k_r\}\bigm|
    e'\subseteq B_{\rho(r)}^{V_r}(v_j)\bigr\}
\end{equation*}
with $\rho\from\NN\to\RR$ satisfying $\rho(r)\le\frac r3$
and $\rho(r)\xto{r\to\infty}\infty$.

We now realise for each $r\in\NN$, $j\in\{1,\dotsc,k_r\}$
and $e'=\{v,w\}\in\cP_{1,2}^{(r)}$ such that $e'\subseteq B_r^{V_r}(v_j)$,
a random variable~$X_{e'}^{r,j}$
with the same distribution as~$X_{\lbrace\psi_{v_j,r}(v),\psi_{v_j,r}(w)\rbrace}$.
We also want
\begin{equation*}
  \bigl\{X_e,X_{e'}^{r,j}
    \bigm|e\in\cP_{1,2}
    ,r\in\NN
    ,j\in\{1,\dotsc,k_r\}
    ,e'\in\cP_{1,2}^{(r)}
    \bigr\}
\end{equation*}
to be independent.

Analogous to~\eqref{laplaceelements},
we define the operator~$A_r^{(\omega)}\from \ell^2(V_r)\to\ell^2(V_r)$,
$\omega\in\Omega$,
with $\alpha\in\RR$ by its matrix elements for $x,y\in V_r$ via
\begin{equation*}
  a_r^{(\omega)}(x,y):=
    \begin{cases}
      X_{\{x,y\}}^{r,j}(\omega)-\alpha\delta_x(y)
        \sum\limits_{z\in V_r\setminus\{x\}}X_{\{x,z\}}^{r,j}(\omega)
        &\text{if $j:=j_r(\{x,y\})>0$,}\\
      0 &\text{else.}
    \end{cases}
\end{equation*}
Note, that $A_r^{(\omega)}$ has hopping range~$2\rho(r)$,
i.e.\ $a_r(x,y)=0$, as soon as $d_r(x,y)>2\rho(r)$.

The operator~$A_r^{(\omega)}$ is symmetric and hence self-adjoint.
Let $x,y\in V_r$ and $1\le i<j\le k_r$ be such that $x,y\in B_{\rho(r)}^{V_r}(v_i)\cap B_{\rho(r)}^{V_r}(v_j)$.
Then we have by \cref{la:welldef}
\[
  \psi_{v_i,r}(x)(\psi_{v_i,r}(y))^{-1}
    =\psi_{v_j,r}(x)(\psi_{v_j,r}(y))^{-1},
\]
which gives that the random variables 
\[
  X_{\{\psi_{v_i,r}(x),\psi_{v_i,r}(y)\}}^{r,i}\quad\text{and}\quad
  X_{\{\psi_{v_j,r}(x),\psi_{v_j,r}(y)\}}^{r,j}
\]
are identically distributed.
Therefore the joint distribution of the matrix elements of~$A_r$
does not dependent on the specific choice of the enumeration
$\{v_1,\dotsc,v_{k_r}\}=V_r^{(0)}$.

As in \cref{dete} we define eigenvalue counting functions.
For each $\omega\in\Omega$ and $r\in\NN$ we set
\[
  N_r^{(\omega)}\from\RR\to[0,1]\textq,
  N_r^{(\omega)}(\lambda):=\frac
    {\setsize{\{\text{eigenvalues of $A_r^{(\omega)}$ not larger than $\lambda$}\}}}
    {\abs{V_r}}\text,
\]
where the eigenvalues are counted with multiplicity.
Beside this we define again $N^{(\omega)}\from\RR\to[0,1]$ as
\begin{equation}\label{Nomega}
  N^{(\omega)}(\lambda):=\spr{E_\lambda^{(\omega)}\delta_{\id}}{\delta_{\id}}\text,
\end{equation}
where again $E_\lambda^{(\omega)}$ is the spectral projection of~$A^{(\omega)}$ on the interval $(-\infty,\lambda]$.
We furthermore define the functions $\bar N_r,\bar N\from \RR\to[0,1]$ by
\begin{equation}\label{Nbar}
  \bar N(\lambda)=\EE[N(\lambda)]\quad\text{and}\quad
  \bar N_r(\lambda)=\EE[N_r(\lambda)]\qquad(\lambda\in\RR,r\in\NN)\text.
\end{equation}
As before in the deterministic setting, the function $\bar N$ is called \emph{spectral distribution function} of the random operator $A$.
\begin{Theorem}\label{thm:expected}
  Let $\bar N_r,\bar N\from \RR\to[0,1]$ be as above.
  Then
  \[
    \lim_{r\to\infty}\bar N_r(\lambda)
      =\bar N(\lambda)
  \]
  at all continuity points~$\lambda$ of~$\bar N$.
\end{Theorem}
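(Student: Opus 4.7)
I would follow the architecture of the proof of \cref{thm:weak1}, replacing pointwise quantities by their expectations throughout. Define
\[
  \cS := \Bigl\{f\from\RR\to\CC\Bigm|
    \lim_{r\to\infty}\int_\RR f(x)\,d\bar N_r(x)
      =\int_\RR f(x)\,d\bar N(x)\Bigr\}.
\]
As in the deterministic case, $\cS$ is a vector space closed under uniform convergence (since $\bar N_r,\bar N$ are probability distributions). The Cauchy-integral, partial-fraction and Stone--Weierstrass arguments from the proof of \cref{thm:weak1} reduce the claim to showing $\Res=\{x\mapsto(z-x)^{-1}\mid z\in\CC\setminus\RR\}\subseteq\cS$; Portmanteau's theorem then concludes. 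Fix $z\in\CC\setminus\RR$. By the trace formula \eqref{trace} and Fubini,
\[
  \int_\RR(z-x)^{-1}\,d\bar N_r(x)
    =\EE\Bigl[\frac1{\abs{V_r}}\sum_{x\in V_r}
      \spr{\delta_x}{(z-A_r)^{-1}\delta_x}\Bigr],
\]
while $\int_\RR(z-x)^{-1}\,d\bar N(x)=\EE[\spr{\delta_{\id}}{(z-A)^{-1}\delta_{\id}}]$ by the spectral theorem.

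As in the deterministic case, the contribution of $x\in V_r\setminus V_r^{(0)}$ is bounded by $2\epsilon(r)/\abs{\Im z}$ via \cref{S2} and $\norm{(z-A_r)^{-1}},\norm{(z-A)^{-1}}\le\abs{\Im z}^{-1}$. For $x\in V_r^{(0)}$, extend $\psi_{x,r}$ to an injection $\psi'_{x,r}\from V_r\to G$ and form $\hat A_{r,x}:=\phi_{x,r}^*A_r\phi_{x,r}$ on $\ell^2(G)$; then $\spr{\delta_x}{(z-A_r)^{-1}\delta_x}=\spr{\delta_{\id}}{(z-\hat A_{r,x})^{-1}\delta_{\id}}$. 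The crucial new \emph{probabilistic} input is that, by the construction of the $X_{e'}^{r,j}$, the identical-distribution requirement \eqref{id_distr}, and the consistency given by \cref{la:welldef}, the joint law of $\bigl(\hat a_{r,x}(g,h)\bigr)_{g,h\in B_{\rho(r)}^G}$ coincides with that of $\bigl(a(g,h)\bigr)_{g,h\in B_{\rho(r)}^G}$. I would therefore couple $\hat A_{r,x}$ and $A$ on a common probability space so that these matrix entries are literally equal; the marginals, and hence $\EE[\spr{\delta_{\id}}{(z-\hat A_{r,x})^{-1}\delta_{\id}}]$, are unchanged.

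Under the coupling, the second resolvent identity together with inserting $\psi\in\ell^2(G)$ close to $\delta_{\id}$ gives
\[
  \EE\bigabs{\spr{\delta_{\id}}{\bigl[(z-\hat A_{r,x})^{-1}-(z-A)^{-1}\bigr]\delta_{\id}}}
  \le \frac{\EE\bignorm[2]{(A-\hat A_{r,x})(z-A)^{-1}\psi}}{\abs{\Im z}}
     +\frac{2\norm[2]{\delta_{\id}-\psi}}{\abs{\Im z}}.
\]
The task is to make the first expectation small uniformly in $x\in V_r^{(0)}$. In the deterministic proof one uses \cref{la:compt} to choose $\phi:=(z-A)^{-1}\psi\in D_0$; the coupling then forces $(A-\hat A_{r,x})\phi$ to vanish on $B_{\rho(r)}^G$ once $\supp\phi\subseteq B_{\rho(r)}^G$, and the remainder is controlled by tails of the matrix elements of $A$. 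In the random setting this $\phi$ has $\omega$-dependent support, so the bounds cannot be applied by direct dominated convergence, which is the principal obstacle. I would circumvent it by passing through the bounded, finite-hopping-range truncations $\tilde A_m$ constructed in the proof of \cref{esa}: for each fixed $m$ the corresponding compact-support approximation $\phi_m$ can be chosen \emph{deterministically}, so that the deterministic tail argument applies once $\rho(r)\ge m$, while \cref{figotinsTrick} combined with the moment condition \eqref{summable} controls the replacement error $\EE\bignorm[2]{(A-\tilde A_m)(z-A)^{-1}\delta_{\id}}$ uniformly in $r$ and $x$. Letting first $r\to\infty$ with $m$ fixed, and then $m\to\infty$, drives both error contributions to zero and yields $\Res\subseteq\cS$, completing the proof.
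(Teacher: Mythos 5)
Your overall architecture coincides with the paper's: reduction to Stieltjes transforms, the trace identity \eqref{trace}, the $2\epsilon(r)/\abs{\Im z}$ boundary contribution via \cref{S2}, transporting $A_r$ to $\ell^2(G)$, and the replacement of $A_{r,x}$ by an operator $\hat A_{r,x}$ that literally agrees with $A$ inside $B_{\rho(r)}^G$ without changing the expectation of the resolvent diagonal --- your ``coupling'' is exactly the paper's explicit construction of $\hat A_{r,x}$ (and your insistence on equality of \emph{joint} laws is the right way to justify the step the paper states only entrywise). Where you genuinely diverge is in handling the $\omega$-dependence of $\supp\bigl((z-A)^{-1}\psi\bigr)$. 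The paper proves \cref{la:measurability}: for each $\kappa$ there is a deterministic radius $n$, an event $M$ with $\PP(M^c)<\kappa$, and a measurable random vector $\psi$ whose image under $(z-A)^{-1}$ is supported in $B_n^G$ on $M$ (measurability via Aumann's selection theorem); the complement of $M$ costs only $2\PP(M^c)/\abs{\Im z}$. Your alternative --- routing through the finite-hopping-range truncations $\tilde A_m$ from the proof of \cref{esa} to obtain a deterministically supported test function --- is plausible, but it inserts an extra layer of approximation whose errors must be controlled on \emph{both} sides of the comparison. On the $A$-side, \cref{figotinsTrick} as stated requires the finite projection $\pi_{B_r^G}$ and joint translational invariance, so you need a Fatou-type extension to $B=(z-A)^{-1}$; on the $\hat A_{r,x}$-side the lemma is unavailable altogether, since $\hat A_{r,x}$ is not translation invariant in distribution, and the second-moment tail bounds must be redone by hand from independence and matched marginals, as the paper does in the computation analogous to \eqref{eq:r/3} and in the estimate of $T_2$.

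There is also one outright error: the claim that under the coupling $(A-\hat A_{r,x})\phi$ vanishes on $B_{\rho(r)}^G$ once $\supp\phi\subseteq B_{\rho(r)}^G$ is false for $\alpha\neq0$. The off-diagonal entries agree inside the ball, but the diagonal entries do not: by \eqref{laplaceelements}, $a(g,g)$ contains $-\alpha\sum_{z\in G\setminus\{g\}}X_{\{g,z\}}$ summed over all of $G$, while $\hat a_{r,x}(g,g)$ replaces the summands outside $B_{\rho(r)}^G$ by entries of the sofic approximant, so their difference $-\alpha\sum_{z\notin B_{\rho(r)}^G}\bigl(X_{\{g,z\}}-\hat a_{r,x}(g,z)\bigr)$ lives squarely inside the ball. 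This is precisely the paper's diagonal term $T_2(r)$; it is controlled only in expectation, by using independence to show $\EE\bigl[\bigl(\sum_{g\notin B_{\rho(r)}^G}\abs{\hat a_{r,x}(h,g)}\bigr)^2\bigr]\le\EE\bigl[\bigl(\sum_{g\notin B_{\rho(r)}^G}\abs{a(h,g)}\bigr)^2\bigr]$ and then invoking the $\ell^1$-moment condition \eqref{summable} with dominated convergence. As written, your plan covers only the case $\alpha=0$; for general random Hamiltonians this additional estimate is indispensable.
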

\begin{proof}
  As in the proof of \cref{thm:weak1} we need to show that for all $z\in\CC\setminus\RR$ 
  \begin{equation}
    \lim_{r\to\infty}\int_\RR(z-x)^{-1}d\bar N_r(x)
      =\int_\RR (z-x)^{-1}d\bar N(x)\text.
  \end{equation}
  The integral on the left hand side is actually a finite sum.
  By linearity we have
  \begin{equation*}
    \int_\RR(z-x)^{-1}d\bar N_r(x)
      =\EE\left[\int_\RR(z-x)^{-1}dN_r(x)\right]
    \text.
  \end{equation*}
  By definition, the Riemann-Stieltjes integral can be written as a limit of a sum. Using the boundedness of $x\mapsto (z-x)^{-1}$,
  and dominated convergence allows to interchange limits and integration, such that we obtain
  \[
   \int_{\RR}(z-x)^{-1}d\bar N(x)
        =\EE\left[\int_{\RR}(z-x)^{-1}dN(x)\right]\text.
  \]

  Therefore we fix $z\in\CC\setminus\RR$ and consider the following difference
  using spectral theorem and the calculation from~\eqref{trace}:
  \begin{align*}
    D_r\etdef\Bigabs{\EE\Bigl[\int_\RR(z-x)^{-1}dN_r(x)\Bigr]
      -\EE\Bigl[\int_\RR(z-x)^{-1}dN(x)\Bigr]}\\
      &=\Bigabs{\EE\Bigl[\frac{1}{\setsize{V_r}}\sum_{x\in V_r}\spr{\delta_x}{(z-A_r)^{-1}\delta_x}\Bigr]
        -\EE\bigl[\spr{\delta_{\id}}{(z-A)^{-1}\delta_{\id}}\bigr]}\\
      &\le\frac1{\setsize{V_r}}\sum_{x\in V_r^{(0)}}\left|\EE\bigl[\spr{\delta_x}{(z-A_r)^{-1}\delta_x}\bigr]
        -\EE\bigl[\spr{\delta_{\id}}{(z-A)^{-1}\delta_{\id}}\bigr]\right|\\
      &\quad+\frac1{\setsize{V_r}}\sum_{x\in V_r\setminus V_r^{(0)}}\left|\EE\bigl[\spr{\delta_x}{(z-A_r)^{-1}\delta_x}\bigr]
        -\EE\bigl[\spr{\delta_{\id}}{(z-A)^{-1}\delta_{\id}}\bigr]\right|\\
      &\le\sup_{x\in V_r^{(0)}}\bigabs{\EE\bigl[\spr{\delta_x}{(z-A_r)^{-1}\delta_x}\bigr]
        -\EE\bigl[\spr{\delta_{\id}}{(z-A)^{-1}\delta_{\id}}]\bigr]}
        +\frac{2\epsilon(r)}{\abs{\Im z}}
      \text.
  \end{align*}
  Here we used $\norm{(z-H)^{-1}}\le\abs{\Im z}^{-1}$ for self-adjoint~$H$.
  Again we use the construction of the proof of \cref{thm:weak1}
  of the operator $A_{r,x}^{(\omega)}$.
  As before we extend the graph isomorphism~$\psi_{x,r}$ at $x\in V_r^{(0)}$
  from~\eqref{psiso} injectively to
  \begin{equation*}
    \psi'_{x,r}\from V_r\to G\text.
  \end{equation*}
  This map induces a projection
  \begin{equation*}
    \phi_{x,r}\from\ell^2(G)\to\ell^2(V_r)\textq,
    \phi_{x,r}(f):=f\circ\psi'_{x,r}\text.
  \end{equation*}
  We use this to transport~$A_r^{(\omega)}$ to~$\ell^2(G)$
  from the point of view of~$x$:
  \begin{equation*}
    A_{r,x}^{(\omega)}:=\phi_{x,r}^*A_r^{(\omega)}\phi_{x,r}
      \from\ell^2(G)\to\ell^2(G)\text.
  \end{equation*}
  As before we have for all $x\in V_r^{(0)}$
  \begin{align*}
    \spr{\delta_x}{(z-A_r^{(\omega)})^{-1}\delta_x}&
      =\spr{\delta_{\id}}{(z-A_{r,x}^{(\omega)})^{-1}\delta_{\id}}\text.
  \end{align*}
  Now for each $x\in V_r^{(0)}$ we define a new approximating operator
  $\hat A_{r,x}^{(\omega)}\from\ell^2(G)\to\ell^2(G)$ of~$A^{(\omega)}$
  by its matrix elements
  \begin{equation*}
    \hat a_{r,x}^{(\omega)}(g,h)
      :=\begin{cases}
        a^{(\omega)}(g,h)&\text{ if }g,h\in B_{\rho(r)}^G,g\ne h\\
        X_{\{g\}}(\omega)-\alpha\sum_{z\in G\setminus\{g\}}
          \hat a_{r,x}^{(\omega)}(g,z)
          &\text{ if }g=h\in B_{\rho(r)}^G\\
        a_{r,x}^{(\omega)}(g,h)&\text{ else.}
      \end{cases}
  \end{equation*}
  Here $X_{\lbrace g\rbrace}(\omega)$
  is the potential part of the diagonal matrix element~$a^{(\omega)}(g,g)$
  of~$A^{(\omega)}$, see~\eqref{laplaceelements}.
  This gives that still for each $g,h\in G$,
  the distribution of $a_{r,x}^{(\omega)}(g,h)$
  equals the distribution of $\hat a_{r,x}^{(\omega)}(g,h)$.
  Thereby, with expectation we get
  \begin{align*}
    \EE\spr{\delta_{\id}}{(z-A_{r,x})^{-1}\delta_{\id}}
      =\EE\spr{\delta_{\id}}{(z-\hat A_{r,x})^{-1}\delta_{\id}}\text.
  \end{align*}
  \par
  In order to control error terms, we approximate in a two step scheme.
  For $M\in\cA$, using Cauchy-Schwarz and the triangle inequality, we have
  \begin{align*}
    D_r&
      \le\sup_{x\in V_r^{(0)}}\bigabs{\EE\spr{\delta_{\id}}{(z-\hat A_{r,x})^{-1}\delta_{\id}}-\EE\spr{\delta_{\id}}{(z-A)^{-1}\delta_{\id}}}
        +\frac{2\epsilon(r)}{\abs{\Im z}}  \\&
      \le\sup_{x\in V_r^{(0)}}\bigabs{\EE\bigl[\ifu{M}\spr{\delta_{\id}}{\bigl((z-\hat A_{r,x})^{-1}-(z-A)^{-1}\bigr)\delta_{\id}}\bigr]}\\&\quad
        +\sup_{x\in V_r^{(0)}}\bigabs{\EE\bigl[\ifu{M^c}\spr{\delta_{\id}}{\bigl((z-\hat A_{r,x})^{-1}-(z-A)^{-1}\bigr)\delta_{\id}}\bigr]}
        +\frac{2\epsilon(r)}{\abs{\Im z}}\\&
      \le\sup_{x\in V_r^{(0)}}\bigabs{\EE\bigl[\ifu{M}\spr{\delta_{\id}}{\bigl((z-\hat A_{r,x})^{-1}-(z-A)^{-1}\bigr)\delta_{\id}}\bigr]}
        +2\frac{\epsilon(r)+\PP(M^c)}{\abs{\Im z}}\text.
  \end{align*}
  In the second step we insert a random vector $\psi\from\Omega\to\ell^2(G)$:
  \begin{align*}
    D_r&
      \le\sup_{x\in V_r^{(0)}}\bigabs{\EE\bigl[\ifu M\spr{\delta_{\id}}
        {\bigr((z-\hat A_{r,x})^{-1}-(z-A)^{-1}\bigl)\psi}\bigr]}   
         +2\frac{\epsilon(r)+\PP(M^c)}{\abs{\Im z}} \\&
      \quad+\sup_{x\in V_r^{(0)}}\bigabs{\EE\bigl[\ifu M\spr{\delta_{\id}}
        {\bigl((z-\hat A_{r,x})^{-1}-(z-A)^{-1}\bigr)(\delta_{\id}-\psi)}\bigr]}\\&
      \le\sup_{x\in V_r^{(0)}}\bigl|\EE\bigl[\ifu M\spr{\delta_{\id}}
        {\bigr((z-\hat A_{r,x})^{-1}-(z-A)^{-1}\bigr)\psi}\bigr]\bigr|\\&\quad
    +2\frac{\epsilon(r)+\PP(M^c)+\EE[\ifu M\norm[2]{\delta_{\id}-\psi}]}{\abs{\Im z}}\text.
  \end{align*}
  Applying the second resolvent identity and again Cauchy Schwarz inequality,
  we arrive at
  \begin{equation}\label{echim}
  \begin{split}
    D_r&
      \le\sup_{x\in V_r^{(0)}}\bigabs{\EE\bigl[\ifu M\spr{\delta_{\id}}{(z-\hat A_{r,x})^{-1}
        (A-\hat A_{r,x})(z-A)^{-1}\psi}\bigr]}\\&\quad
          +2\frac{\epsilon(r)+\PP(M^c)+\EE[\ifu M\norm[2]{\delta_{\id}-\psi}]}{\abs{\Im z}}\\&
      \le\frac1{\abs{\Im z}}\sup_{x\in V_r^{(0)}}\EE\bigl[\ifu M\norm[2]{(A-\hat A_{r,x})(z-A)^{-1}\psi}\bigr]\\&\quad
        +2\frac{\epsilon(r)+\PP(M^c)+\EE[\ifu M\norm[2]{\delta_{\id}-\psi}]}{\abs{\Im z}}\text.
  \end{split}
  \end{equation}

  We now choose $M$ and $\psi$ appropriately.

\begin{Lemma}\label{la:measurability}
  Let $A=(A_\omega)$ be a random operator with domain $D_0$ which is self-adjoint for all $\omega$ and let $\kappa>0$ and $z\in\CC\setminus\RR$.
  Then there exist $n=n_\kappa\in\NN$, a set $M=M_{\kappa}\in\cA$ and a random vector $\psi=\psi_\kappa\from\Omega\to\ell^2(G)$,
  such that $\PP(M)>1-\kappa$ and for $\omega\in M$ we have
  \begin{align*}
    \norm[2]{\delta_{\id}-\psi(\omega)} \leq \kappa \quad\text{and}\quad
    \supp\bigl((z-A^{(\omega)})^{-1}\psi(\omega)\bigr)\subseteq B_n^G.
  \end{align*}
\end{Lemma}
\begin{proof}
   For each $n\in\NN$ we define the set 
    \begin{equation*}
      M_{n,\kappa}
       :=\bigl\{\omega\in\Omega\bigm|\exists f\in\ell^2(G)\colon
          \supp\bigl((z-A^{(\omega)})^{-1}f\bigr)\subseteq B_n^G,
            \norm[2]{\delta_{\id}-f}\le\kappa\bigr\}\text.
    \end{equation*}
    In order to verify the measurability of $M_{n,\kappa}\subseteq\Omega$ we claim that one can rewrite this set in the following way
    \begin{equation}\label{Mmb}
      M_{n,\kappa}
        =\Isect_{m\in\NN}
          \Union_{\substack{f\in\ell_\QQ^2(G)\\ 
                  \norm[2]{f-\delta_{\id}}<\kappa+m^{-1}}}
          \Isect_{g\in G\setminus B_n^G}
          \bigl\{\omega\in\Omega\bigm|
            \abs{\spr{\delta_g}{(z-A^{(\omega)})^{-1}f}}<m^{-1}\bigr\}.
    \end{equation}
    To prove \eqref{Mmb}, we note that the inclusion~``$\subseteq$''
    holds because of the continuity of
    $f\mapsto\spr{\delta_g}{(z-A^{(\omega)})^{-1}f}$ uniformly in~$g$.
    The reverse inclusion~``$\supseteq$'' requires more care.
    For each $\omega$ in the right hand side,
    we find for all $m\in\NN$ an $f_m\in\ell_\QQ^2(G)$
    with $\norm[2]{f_m-\delta_{\id}}<\kappa+m^{-1}$,
    such that $\abs{((z-A^{(\omega)})^{-1}f_m)(g)}<m^{-1}$
    for all $g\in G\setminus B_n^G(\id)$.
    Since
    \begin{equation}\label{dom}
      \sup_{m\in\NN}\norm[2]{f_m}
       \le2+\kappa\text,
    \end{equation}
Therefore for all $g\in G$ we have that $(f_m(g))_{m\in\NN}$ is a bounded sequence and hence contains a convergent subsequence.
Using a diagonal sequence we obtain a subsequence such that $(f_{m_k}(g))_{k\in\NN}$ converges for all $g\in G$. We set $f:=\lim_{k\to\infty}f_{m_k}$ to be the pointwise limit of these functions. Then we obtain by Fatou's Lemma and \eqref{dom} that $f\in\ell^2(G)$ and
\begin{align*}
 \norm[2]{f-\delta_{\id}}\leq \kappa \qquad\text{and}\qquad \supp\bigl((z-A^{(\omega)})^{-1}f\bigr)\subseteq B_n^G,
\end{align*}
which shows $\omega\in M_{n,\kappa}$.
    \par
    Note that for any $n\in\NN$ we have $M_{n,\kappa}\subseteq M_{n+1,\kappa}$.
    For each $\omega\in\Omega$, the compactly supported functions
    $D_0$ form a core for $A^{(\omega)}$.
    Therefore Lemma \ref{la:compt} shows that there exists $\psi\in\ell^2(G)$ with
\[
 \norm[2]{\delta_{\id}-\psi}<\kappa \qquad\text{and}\qquad (z-A^{(\omega)})^{-1}\psi\in D_0.
\]
We rephrase this fact as
    \begin{equation*}
      \Omega=\bigcup\nolimits_{n\in\NN}M_{n,\kappa}\qquad(\kappa>0)\text.
    \end{equation*}
    We note $\PP(M_{n,\kappa})\xto{n\to\infty}1$,
    and we choose $n_\kappa\in\NN$ with $\PP(M_{n_\kappa,\kappa}^c)<\kappa$.
    We hereby found the claimed
    $M:=M_{n_\kappa,\kappa}$ and $n:=n_\kappa$.
    \par
    Finally, note that
    \begin{align*}
      \widetilde{M_{n,\kappa}}\etdef
        \{(\omega,f)\in\Omega\times B_\kappa^{\ell^2(G)}(\delta_{\id})
          \mid\supp\bigl((z-A^{(\omega)})^{-1}f\bigr)\subseteq B_n^G
        \}\\&
        =\{(\omega,f)\in\Omega\times B_\kappa^{\ell^2(G)}(\delta_{\id})
          \mid\forall g\in G\setminus B_n^G\colon
            \spr{(\cc z-A^{(\omega)})^{-1}\delta_g}f=0
        \}\text,
    \end{align*}
    is measurable,
    since the inner product is continuous and the factors are measurable,
    see \cref{esa}.
    Thus, the random vector~$\psi$
    exists by the measurable choice theorem by R.~J.~Aumann,
    see \cite[18.27 Corollary]{AliprantisBorder2006}
\end{proof}

  We continue to isolate the effects of the approximation for $\rho(r)\ge n$,
  using the random vector $\phi(\omega):=(z-A^{(\omega)})^{-1}\psi(\omega)$:
  \begin{align*}
    \EE\bigl[\ifu M\norm[2]{(A-\hat A_{r,x})\phi}\bigr]
      =\EE\Bigl[\ifu M\Bigl(\sum_{g\in G}
        \bigabs{(A-\hat A_{r,x})\phi(g)}^2\Bigr)^{1/2}\Bigr]
      \le T_1(r)+\abs\alpha T_2(r)\text,
  \end{align*}
  where we used subadditivity of the square root to separate the non-diagonal terms in~$T_1(r)$
  and the diagonal terms in~$T_2(r)$.
  \par
  For all $\omega\in M$ the vector $\phi(\omega)$
  is supported in~$B_n^G\subseteq B_{\rho(r)}^G$ and
  \begin{align*}
    \norm[\infty]{\phi(\omega)}&
      \le\norm[2]{\phi(\omega)}
      \le\norm[2]{(z-A^{(\omega)})^{-1}\psi(\omega)}
      \le(1+\kappa)/\abs{\Im z}\text.
  \end{align*}
  We apply this and bound $T_1(r)$ as in the deterministic setting:
  \begin{align*}
    T_1(r)\etdef
     \EE\Bigl[\ifu M\Bigl(\sum_{g\in G\setminus B_{\rho(r)}^G}
        \Bigabs{\sum_{h\in\supp\phi}\bigl(a(g,h)-\hat a_{r,x}(g,h)\bigr)\phi(h)}^2
          \Bigr)^{1/2}\Bigr]\\&
      \le\EE\Bigl[\ifu M\norm[\infty]\phi\setsize{\supp\phi}^{1/2}\Bigl(\sum_{g\in G\setminus B_{\rho(r)}^G}
        \sum_{h\in\supp\phi}\bigabs{a(g,h)-\hat a_{r,x}(g,h)}^2\Bigr)^{1/2}\Bigr]\\&
      \le\frac{(1+\kappa)\setsize{B_n^G}}{\abs{\Im z}}\EE\Bigl[\Bigl(\sum_{h\in B_n^G}\sum_{g\in G\setminus B_{\rho(r)}^G}
        \bigabs{a(g,h)-\hat a_{r,x}(g,h)}^2\Bigr)^{1/2}\Bigr]\\&
      \le\frac{(1+\kappa)\setsize{B_n^G}}{\abs{\Im z}}\biggl(
      \Bigl(\sum_{h\in B_n^G}\sum_{g\in G\setminus B_{\rho(r)}^G}
        \EE\bigl[\abs{a(g,h)}^2\bigr]\Bigr)^{1/2}\\&\qquad\qquad\qquad
      +\Bigl(\sum_{h\in B_n^G}\sum_{g\in G\setminus B_{\rho(r)}^G}
        \EE\bigl[\abs{\hat a_{r,x}(g,h)}^2\bigr]\Bigr)^{1/2}\biggr)\text.
  \end{align*}
  Note that in the last step we use Jensen's inequality.
  Now we proceed as in~\eqref{eq:r/3} using again \cref{remark/3}
  and obtain
  \begin{align*}
    T_1(r)&
      \le2\frac{(1+\kappa)\setsize{B_n^G}}{\abs{\Im z}}\Bigl(\sum_{h\in B_n^G}\EE\Bigl[ \sum_{g\in G\setminus B_{\rho(r)}^G}\bigabs{a(g,h)}^2\Bigr]\Bigr)^{1/2} 
      \xto{r\to\infty}0
  \end{align*}
  by dominated convergence, since $\EE\bigl[\norm[2]{A\delta_g}^2\bigr]<\infty$.
  For $\alpha\ne0$, the diagonal term
  \begin{equation*}
    T_2(r)
     :=\abs\alpha^{-1}\EE\Bigl[\ifu M\Bigl(\sum_{h\in\supp\phi}\bigabs{\bigl(a(h,h)-\hat a_{r,x}(h,h)\bigr)
        \phi(h)}^2\Bigr)^{1/2}\Bigr]
  \end{equation*}
  is dealt with as follows, using Jensen's inequality:
  \begin{align*}
    T_2(r)&
      \le\EE\Bigl[\ifu M\norm[\infty]\phi\Bigl(\sum_{h\in\supp\phi}\Bigabs{
          \sum_{g\in G\setminus B_{\rho(r)}^G}\bigl(\hat a_{r,x}(h,g)-a(h,g)\bigr)
        }^2\Bigr)^{1/2}\Bigr]\\&
      \le\frac{1+\kappa}{\abs{\Im z}}\EE\Bigl[\Bigl(\sum_{h\in B_n^G}\Bigl(
          \sum_{g\in G\setminus B_{\rho(r)}^G}\abs{\hat a_{r,x}(h,g)}
         +\sum_{g\in G\setminus B_{\rho(r)}^G}\abs{a(h,g)}
        \Bigl)^2\Bigr)^{1/2}\Bigr]\\&
      \le\sqrt2\frac{1+\kappa}{\abs{\Im z}}\biggl(
         \sum_{h\in B_n^G}\EE\Bigl[\Bigl(\sum_{g\in G\setminus B_{\rho(r)}^G}\abs{\hat a_{r,x}(h,g)}\Bigr)^2\Bigr]
         \\&\qquad\qquad\qquad
        +\sum_{h\in B_n^G}\EE\Bigl[\Bigl(\sum_{g\in G\setminus B_{\rho(r)}^G}\abs{a(h,g)}\Bigr)^2\Bigr]
      \biggr)^{1/2}\text.
  \end{align*}
  Analogous to \eqref{eq:r/3} we see for $h\in B_n^G$:
  \begin{align*}&\phantom{{}={}}
    \EE\Bigl[\Bigl(\sum_{g\in G\setminus B_{\rho(r)}^G}\abs{\hat a_{r,x}(h,g)}\Bigr)^2\Bigr]\\&
      =\EE\Bigl[\sum_{g,g'\in B_r^G\setminus B_{\rho(r)}^G}\abs{\hat a_{r,x}(h,g)}\abs{\hat a_{r,x}(h,g')}\Bigr]\\&
      =\sum_{g\ne g'\in B_r^G\setminus B_{\rho(r)}^G}\EE\bigl[\abs{\hat a_{r,x}(h,g)}\bigr]\EE\bigl[\abs{\hat a_{r,x}(h,g')}\bigr]
        +\sum_{g\in B_r^G\setminus B_{\rho(r)}^G}\EE\bigl[\abs{\hat a_{r,x}(h,g)}^2\bigr]\\&
      \le\sum_{g\ne g'\in G\setminus B_{\rho(r)}^G}\EE\bigl[\abs{a(h,g)}\bigr]\EE\bigl[\abs{a(h,g')}\bigr]
        +\sum_{g\in G\setminus B_{\rho(r)}^G}\EE\bigl[\abs{a(h,g)}^2\bigr]\\&
      =\EE\Bigl[\Bigl(\sum_{g\in G\setminus B_{\rho(r)}^G}\abs{a(h,g)}\Bigr)^2\Bigr]\text.
  \end{align*}
  A consequence of this is
  \begin{align*}
    T_2(r)&
      \le\sqrt{8}\frac{1+\kappa}{\abs{\Im z}}\Bigl(\sum_{h\in B_n^G}\EE\Bigl[\Bigl(
          \sum_{g\in G\setminus B_{\rho(r)}^G}\bigabs{a(h,g)}
        \Bigl)^2\Bigr]\Bigr)^{1/2}
      \xto{r\to\infty}0\text,
  \end{align*}
  again by Lebesgue, this time with $\EE\bigl[\norm[1]{A\delta_h}^2\bigr]<\infty$.
  Now conclude from~\eqref{echim}
  \begin{align*}
    \limsup_{r\to\infty}D_r&
      \le2\limsup_{r\to\infty}\frac{\epsilon(r)+\PP(M^c)
        +\EE[\ifu M\norm[2]{\delta_{\id}-\psi}]}{\abs{\Im z}}
      \xto{r\to\infty}
      \frac{4\kappa}{\abs{\Im z}}
    \text.
  \end{align*}
  Since $\kappa>0$ was arbitrary, we conclude $D_r\xto{r\to\infty}0$.
\end{proof}

\subsection{Almost sure convergence}\label{almostsure}

The following concentration inequality is taken from
\cite[Theorem~3.1]{McDiarmid1998}.
\begin{Theorem}[{\cite[Theorem~3.1]{McDiarmid1998}}]\label{thm:McDia}
  Let $X=(X_1,\dotsc,X_n)$ be a family of independent random
  variables with values in $\RR$,
  and let $f\colon\RR^n\to\RR$ be a function, such that whenever $x\in\RR^n$ and $x'\in \RR^n$ differ only in one coordinate we have
  \begin{equation*}
    \abs{f(x)-f(x')}\le c.
  \end{equation*}
  Then, for $\mu:=\EE[f(X)]$ and any $\epsilon\ge0$,
  \begin{equation*}
    \PP(\abs{f(X)-\mu}\ge\epsilon)\le2 \exp\Big(-\frac{2\epsilon^2}{nc^2}\Big)\text.
  \end{equation*}
\end{Theorem}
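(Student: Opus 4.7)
The plan is to follow the classical \emph{Doob martingale} approach combined with Hoeffding's lemma, which is the standard route to bounded differences inequalities of McDiarmid type. First I would introduce the Doob martingale $Y_k := \EE[f(X) \mid X_1, \dots, X_k]$ for $k = 0, 1, \dots, n$, with $Y_0 = \mu$ and $Y_n = f(X)$, and write $f(X) - \mu = \sum_{k=1}^n D_k$ as a telescoping sum of martingale differences $D_k := Y_k - Y_{k-1}$ with respect to the filtration $\cF_k := \sigma(X_1, \dots, X_k)$.

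The central step is to verify that each $D_k$ is, conditionally on $\cF_{k-1}$, almost surely confined to an interval of length at most $c$. To this end, I would define
\[
  U_k := \sup_{x} \EE\bigl[f(X_1, \dots, X_{k-1}, x, X_{k+1}, \dots, X_n) \bigm| \cF_{k-1}\bigr],
\]
and $L_k$ analogously with an infimum, where $x$ ranges over the essential range of $X_k$. By independence of the $X_i$, the conditional expectation on the right side is really an expectation only in $X_{k+1}, \dots, X_n$, and $Y_k$ equals this expression evaluated at $x = X_k$. Hence $Y_k \in [L_k, U_k]$ almost surely; moreover, the bounded differences hypothesis applied pointwise inside the expectation yields $U_k - L_k \leq c$, so that $D_k = Y_k - \EE[Y_k \mid \cF_{k-1}]$ has conditional range at most $c$.

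Next I would invoke Hoeffding's lemma: if a random variable $Z$ satisfies $\EE[Z \mid \cG] = 0$ and lies in an interval of length $\ell$ almost surely on $\cG$, then $\EE[\e^{tZ} \mid \cG] \leq \exp(t^2 \ell^2 / 8)$. Specializing to $Z = D_k$ and $\cG = \cF_{k-1}$ gives $\EE[\e^{tD_k} \mid \cF_{k-1}] \leq \e^{t^2 c^2 / 8}$, and iterating the conditional expectations from $\cF_n$ down to $\cF_0$ yields
\[
  \EE[\e^{t(f(X) - \mu)}] \leq \e^{n t^2 c^2 / 8} \qquad (t \in \RR).
\]
A standard exponential Markov inequality and optimization in $t$ (the minimizer being $t = 4\epsilon / (nc^2)$) produce the one-sided bound $\PP(f(X) - \mu \geq \epsilon) \leq \exp(-2\epsilon^2/(nc^2))$, and applying the same reasoning to $-f$ followed by a union bound yields the claimed two-sided estimate with the factor $2$.

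The main obstacle, and the step to execute with the most care, is the verification that $D_k$ has conditional range at most $c$. It rests crucially on the independence of the $X_i$: independence is what allows one to express $Y_k$ as a conditional expectation over the coordinates after $k$, with the $k$-th coordinate frozen at its actual value, so that a comparison of $Y_k$ with $L_k$ and $U_k$ reduces to the bounded differences hypothesis applied to inputs agreeing in all coordinates except the $k$-th. Once this range estimate is in hand, Hoeffding's lemma, the Chernoff bound, and the optimization in $t$ are completely routine.
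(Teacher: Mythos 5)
Your proof is correct, but note that the paper does not prove this statement at all: it is imported verbatim from \cite[Theorem~3.1]{McDiarmid1998} and used as a black box in the proof of the almost-sure convergence theorem. The argument you give --- Doob martingale $Y_k=\EE[f(X)\mid X_1,\dotsc,X_k]$, the observation that independence lets you bound the conditional range of each increment by $c$ via the bounded-differences hypothesis, Hoeffding's lemma, the Chernoff bound optimized at $t=4\epsilon/(nc^2)$, and a union bound for the two-sided estimate --- is the standard proof and is exactly the one found in the cited reference; the only point deserving a little extra care is the measurability of the envelopes $U_k$ and $L_k$ when $X_k$ has uncountable range, which is routinely handled by working with essential suprema or a countable dense subset.
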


We use \cref{thm:McDia} to upgrade the convergence in \cref{thm:expected}.
We obtain almost sure convergence as well as a Pastur-Shubin-trace formula.
Here we need an additional assumption on~$\rho$,
namely that~$\rho$ does not grow too fast.
\begin{Theorem}\label{thm:surprise}
  Let $N_r$ and $\bar N$ be as in~\eqref{Nomega} and~\eqref{Nbar}
  and $\rho(r):=\frac{\ln r}{4\ln\setsize S}-1$.
  Then there is a set $\tilde\Omega\in\cA$ with full probability
  $\PP(\tilde\Omega)=1$ and
  \begin{equation*}
    \lim_{r\to\infty}N_r^{(\omega)}(\lambda)
      =\bar N(\lambda)
  \end{equation*}
  for all $\omega\in\tilde\Omega$
  and all continuity points~$\lambda$ of~$\bar N$.
\end{Theorem}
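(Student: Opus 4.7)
The plan is to upgrade the convergence in mean from \cref{thm:expected} to almost sure convergence by exploiting the independence of the random variables $X_{e'}^{r,j}$ entering the construction of $A_r$ via McDiarmid's concentration inequality (\cref{thm:McDia}), and then conclude by a standard Borel--Cantelli argument.

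First, I fix $\lambda\in\RR$ and view $N_r^{(\omega)}(\lambda)$ as a function of the independent family $\bigl(X_{e'}^{r,j}\bigr)$. I verify the bounded-difference hypothesis required by \cref{thm:McDia}: a change in a single variable $X_{\{x,y\}}^{r,j}$ perturbs only matrix entries of $A_r$ supported on the $2\times 2$ principal submatrix indexed by $\{x,y\}$ (the off-diagonal pair $a_r(x,y),a_r(y,x)$ when $j=j_r(\{x,y\})$, and the diagonal entries $a_r(x,x),a_r(y,y)$ when $j$ coincides with $j_r(\{x\})$ or $j_r(\{y\})$ respectively). Hence the perturbation has rank at most $2$, and Weyl's eigenvalue interlacing / the min-max principle yields
\[
  \bigabs{N_r^A(\lambda)-N_r^{A'}(\lambda)}\leq\frac{2}{\abs{V_r}}.
\]
A change in a diagonal variable $X_{\{x\}}^{r,j}$ even gives a rank-$1$ perturbation, so $c:=2/\abs{V_r}$ is a valid McDiarmid constant throughout.

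Second, I estimate the total number $n_r$ of independent random variables involved. Since $A_r$ has hopping range $2\rho(r)$, only variables indexed by pairs $\{x,y\}$ with $d_r(x,y)\leq 2\rho(r)$ enter (together with $\abs{V_r}$ diagonal ones), so
\[
  n_r\leq C\,\abs{V_r}\cdot\abs{B_{2\rho(r)}^{V_r}}
       \leq C\,\abs{V_r}\cdot\abs{S}^{2\rho(r)}
\]
for a universal constant $C$. McDiarmid then yields for any $\epsilon>0$
\[
  \PP\Bigl(\bigabs{N_r^{(\omega)}(\lambda)-\bar N_r(\lambda)}\geq\epsilon\Bigr)
    \leq 2\exp\Bigl(-\frac{\epsilon^2\abs{V_r}}{2C\abs{S}^{2\rho(r)}}\Bigr).
\]
The choice $\rho(r)=\frac{\ln r}{4\ln\abs{S}}-1$ is tuned precisely so that $\abs{S}^{2\rho(r)}=\abs{S}^{-2}\sqrt r$. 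Since $G$ is an infinite finitely generated group, its Cayley graph contains an infinite geodesic ray from $\id$, hence $\abs{B_r^G}\geq r+1$; and by \cref{S1} applied at any $v\in V_r^{(0)}$ we obtain $\abs{V_r}\geq\abs{B_r^G}\geq r+1$. The exponent above therefore decays like $-c_1\epsilon^2\sqrt r$, the probabilities are summable in $r$, and Borel--Cantelli gives
\[
  N_r^{(\omega)}(\lambda)-\bar N_r(\lambda)\xto{r\to\infty}0
  \quad\PP\text{-a.s.}
\]
Together with $\bar N_r(\lambda)\to\bar N(\lambda)$ from \cref{thm:expected}, this gives almost sure convergence $N_r^{(\omega)}(\lambda)\to\bar N(\lambda)$ at every fixed continuity point $\lambda$ of $\bar N$.

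Finally, to assemble a single full-measure event $\tilde\Omega$ valid simultaneously at all continuity points, I choose a countable dense subset $D$ of the continuity set of $\bar N$ (which exists because $\bar N$ is monotone and therefore has at most countably many discontinuities) and intersect the countably many full-measure events corresponding to $\lambda\in D$. For an arbitrary continuity point $\lambda$ of $\bar N$, I sandwich it from both sides by elements of $D$ and use monotonicity of $\lambda\mapsto N_r^{(\omega)}(\lambda)$ together with continuity of $\bar N$ at $\lambda$ to pass to the limit; this is a standard Glivenko--Cantelli-type argument. The main obstacle is the delicate balance in $\rho(r)$: the number of relevant random variables grows like $\abs{V_r}\cdot\abs{S}^{2\rho(r)}$, so $\abs{S}^{2\rho(r)}$ must grow strictly slower than the linear guaranteed growth of $\abs{V_r}$. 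The choice $\abs{S}^{2\rho(r)}=\abs{S}^{-2}\sqrt r$ is precisely the regime in which the McDiarmid bound becomes summable while still allowing $\rho(r)\to\infty$, which is what the approximation results of \cref{mean} require.
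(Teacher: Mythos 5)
Your proposal is correct and follows essentially the same route as the paper's own proof: McDiarmid's inequality with bounded-difference constant $c=2/\setsize{V_r}$ coming from a rank-at-most-$2$ perturbation, the count $n_r\lesssim\setsize{V_r}\setsize S^{2\rho(r)}\approx\setsize{V_r}\sqrt r$ combined with $\setsize{V_r}\ge r$ to make the tail probabilities summable, Borel--Cantelli, and finally the extension from a countable dense set of continuity points to all of them via monotonicity. Your additional observations (the explicit rank-$2$ bookkeeping for which entries a single $X_{e'}^{r,j}$ touches, and the geodesic-ray justification of $\setsize{V_r}\ge r$) only make explicit what the paper leaves implicit.
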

\begin{proof}
  Denote the set of continuity points of $\bar N$ by $\cC$
  and let $\lambda\in\cC$ and $\epsilon>0$.
  For $r$ large enough, we have by \cref{thm:expected}
  \begin{align}\label{triangle}
    \begin{split}&\phantom{{}={}}
      \PP\bigl(\abs{N_r(\lambda)-\bar N(\lambda)}\ge\epsilon\bigr)\\&
        \le\PP\bigl(\abs{N_r(\lambda)-\bar N_r(\lambda)}
                \ge\epsilon-\abs{\bar N_r(\lambda)-\bar N(\lambda)}\bigr)\\&
        \le\PP\bigl(\abs{N_r(\lambda)-\bar N_r(\lambda)}\ge\epsilon/2\bigr)
        \text.
    \end{split}
  \end{align}
  We want to apply \cref{thm:McDia}.
  Each random variable on a bond of the graph~$\Gamma_r$
  in a sofic approximation has bounded effect on the eigenvalue counting function, namely
  \begin{equation*}
    \abs{N_r^{(\omega)}(\lambda)-N_r^{(\omega')}(\lambda)}
      \le c:=2/\setsize{V_r}
  \end{equation*}
  for all $\omega,\omega'\in\Omega$ which differ only on a single bond.
  This is clear as the associated operators differ only by a rank~$2$ perturbation.
  \par
  By construction, the number~$n$ of random variables is bounded by
  \begin{align*}
    n&
      \le\setsize{V_r}\setsize S^{2(\rho(r)+1)}
      =\setsize{V_r}\setsize S^{\frac{\ln r}{2\ln\setsize S}}
      =\setsize{V_r}\sqrt r\text.
  \end{align*}
  The relevant quantity in \cref{thm:McDia} is
  \begin{equation*}
    nc^2
      \le\setsize{V_r}\sqrt r\cdot4/\setsize{V_r}^2
      =4\sqrt r/\setsize{V_r}
      \le4/\sqrt r\text.
  \end{equation*}
  Use $\setsize{V_r}\ge r$ for the last step.
  \Cref{thm:McDia} and~\eqref{triangle} combined give
  \begin{align}\label{enterHere}
    \sum_{r\in\NN}\PP(\abs{N_r(\lambda)-\bar N(\lambda)}\ge\epsilon)&
      \le\sum_{r\in\NN}2\e^{-\epsilon^2\sqrt r/8}
      <\infty\text.
  \end{align}
  This is by definition almost complete convergence   of~$N_r(\lambda)$ to~$\bar N(\lambda)$ and implies almost sure convergence, i.e.,
  the existence of~$\Omega_\lambda\in\cA$ with $\PP(\Omega_\lambda)=1$ and
  \begin{equation*}
    N_r^{(\omega)}(\lambda)\xto{r\to\infty}\bar N(\lambda)
    \qquad(\omega\in\Omega_\lambda)\text.
  \end{equation*}
  The monotone and bounded function~$\bar N$
  has only countably many discontinuities.
  We choose $M\subseteq\cC$ countable and dense.
  Then, the set $\tilde\Omega:=\Isect_{\lambda\in M}\Omega_\lambda$ has probability~$1$, too.
  Now fix $\omega\in\tilde\Omega$.
  We know for all $\lambda\in\RR$
  \begin{align*}
    \limsup_{r\to\infty}N_r^{(\omega)}(\lambda)&
      \le\inf_{\lambda'\in M\isect[\lambda,\infty)}\lim_{r\to\infty}N_r^{(\omega)}(\lambda')
      =\inf_{\lambda'\in M\isect[\lambda,\infty)}\bar N(\lambda')
      =\bar N(\lambda)\text,
  \end{align*}
  since~$\bar N$ is monotone and continuous from the right, and~$M$ is dense.
  In the other direction, for all $\lambda\in\cC$ we have
  \begin{align*}
    \liminf_{r\to\infty}N_r^{(\omega)}(\lambda)&
      \ge\sup_{\lambda'\in M\isect(-\infty,\lambda]}\lim_{r\to\infty}N_r^{(\omega)}(\lambda')
      =\sup_{\lambda'\in M\isect(-\infty,\lambda]}\bar N(\lambda')
      =\bar N(\lambda)\text.
  \end{align*}
  Hereby, $\lim_{r\to\infty}N_r^{(\omega)}(\lambda)$
  exists and equals $\bar N(\lambda)$
  for all $\omega\in\tilde\Omega$ and $\lambda\in\cC$.
\end{proof}

\begin{Remark}
  In many cases, we can allow~$\rho$ to grow much faster.
  Let~$n_r$ denote the number of non-trivial random varibles in
  $\{a(x,y)\mid x,y\in B_r^G\}$.
  The condition on~$\rho$ needed in \cref{thm:surprise},
  namely in~\eqref{enterHere}, is
  \begin{equation*}
    \sum_{r\in\NN}\exp\Bigl(-\frac{\epsilon^2}{2n_r}\setsize{V_r}^2\Bigr)
      <\infty
  \end{equation*}
  for all $\epsilon>0$.
  Assume,~$A$ has finite hopping range, i.e., there exists~$R\in\NN$
  such that $a(x,y)=0$ whenever $d(x,y)\ge R$.
  This is, e.g., the case in the well known Anderson model.
  Then $n_r\le\setsize{V_r}\binom{\setsize S^R}2$, and consequently
  \begin{equation*}
    \sum_{r\in\NN}\exp\Bigl(-\frac{\epsilon^2}{2n_r}\setsize{V_r}^2\Bigr)
      \le\sum_{r\in\NN}\exp\Bigl(-\frac{\epsilon^2}{2\binom{\setsize S^R}2}\setsize{V_r}\Bigr)
      <\infty
  \end{equation*}
  for all $\epsilon>0$, since $\setsize{V_r}\ge r$.
  We see that for operators with finite hopping range,
  $\rho(r):=\floor{r/3}$ suffices, as in the deterministic setting.
\end{Remark}

Note that for \cref{thm:surprise} the matrix elements of~$A$ and~$A_r$ actually have to be random variables defined on the same probability space $(\Omega,\cA,\PP)$.
This makes the operator~$A$ \emph{not ergodic}
with respect to the usual definition of ergodicity of operators,
see e.g.\ \cite{PasturFigotin1992}.
The problem is that we can not define an appropriate group action on this probability space.
However if one considers $A$ on its canonical probability spaces, which is embedded in our space $(\Omega,\cA,\PP)$, then its ergodicity is easy to check. Of course this restriction does not change the properties of $A$, as applying this restriction we only drop information which has no influence on $A$.


\section{Examples and applications}\label{exa}

As an application we show in \cref{percolation}
that the well known percolation model on sofic groups
is covered by our abstract theory.

The class of sofic groups is quite large.
In fact, according to \cite{Weiss2000},
there is no known example for a (finitely generated)
group which fails to be sofic.
As already proved in \cite{Weiss2000},
amenable as well as residually finite groups are surjunctive,
and surjunctive groups are a subclass of sofic groups.
\Cref{amenable,residuallyfinite} are devoted to show these inclusions directly,
including the free group as an example for a non-amenable but residually finite group.

\subsection{Percolation}\label{percolation}
In this section we apply the above results in the random setting to percolation models on graphs.
We will study the approximability of the IDS of the corresponding Laplacian.
The models in consideration will contain short range as well as long range percolation on sofic groups.

As before let~$G$ be a finitely generated sofic group and~$S$ a finite, symmetric set of generators.
Let $\Gamma_{\rm{co}}=(V,E_{\rm{co}})$ be the complete graph over the vertex set $V=G$, i.e.\ the edge set is
\[
  E_{\rm{co}}=\cP_{1,2}=\{e \subseteq G \mid |e|= 2\}.
\]
Furthermore let $p\in \ell^1(G)$ be such that
\[
  0\leq p(x)=p(x^{-1})\leq 1 \quad \quad (x\in G)
\]
and define for distinct $x,y\in G$ the random variables
\begin{equation}
  X_{\{x,y\}}^{(\omega)}=\begin{cases}
                 1 & \text{with probability }p(xy^{-1})\\
		 0 & \text{else.}
                 \end{cases}
\end{equation}
We assume that all these random variables are independent.
Using these random variables we define for each~$\omega$
a random subgraph $\Gamma_\omega=(V,E_\omega)$ of~$\Gamma_{\rm{co}}$, with
\[
  E_\omega=\{e\in E_{\rm{co}}\mid X_e(\omega) =1\}.
\]
Such a graph $\Gamma_\omega$ may contain edges between two arbitrary vertices.
The assumption $p\in\ell^1(G)$ implies that this subgraph is almost surely locally finite, see \cite[Lemma~3.2]{ASV2012}. However, if $p$ is not finitely supported there exists almost surely no upper bound for the vertex degree. In this situation there also exist with probability one edges of arbitrary length, measured in the word metric induced by the generating system $S$. This implies that the Laplacian which we are going to define now is almost surely unbounded.

A special case of this model is the (typical) percolation of the Cayley graph $\Gamma=\Gamma(G,S)$. Here one sets all $p(x)=0$ for all $x\notin S$. Then, obviously $p$ is finitely supported and we have a random graph $\Gamma_\omega$ where the only edges which may exist are the edges of $\Gamma$.
   
The matrix elements of the operator in consideration are given by
\begin{equation}\label{percmatrix}
  a^{(\omega)}(x,y)=\begin{cases}
                     X_{\{x,y\}}& \text{ if }x\ne y\\
                     -\sum_{z\neq x} X_{\{x,z\}}(\omega) & \text{ else.}
                   \end{cases}
\end{equation}
This immediately gives that conditions \eqref{sym} and \eqref{train} are fulfilled.
In order to show essential self-adjointness it remains to verify \eqref{summable},
c.f.\ \cref{esa}.  Therefore we define for each $\omega$ the set $N(\omega):=\{x\in G\mid X_{\{\id,x\}}(\omega)=1\}$ and calculate for all $\omega$ where $\Gamma_\omega$ is locally finite
\begin{align*}
 \Bigl(\sum_{x\in G}\abs{X_{\{\id,x\}}{(\omega)}}\Bigr)^2 
= \Bigl(\sum_{x\in N(\omega)}\abs{X_{\{\id,x\}}{(\omega)}}\Bigr)^2
\leq \abs{N(\omega)} \sum_{x\in N(\omega)}\abs{X_{\{\id,x\}}{(\omega)}}
\end{align*}
By monotone convergence we get
\begin{align*}
  \EE\biggl(\Bigl(\sum_{x\in G}\abs{X_{\{\id,x\}}}\Bigr)^2 \biggr)
\leq  \sum_{x\in G} \EE\left( \abs{N}\abs{X_{\{\id,x\}}}\right).
\end{align*}
Note that $N$ and $X_{\{\id,x\}}$ are not independent. Therefore we define for each $x\in G$ a random variable by $N_x(\omega):=\abs{\{y\in G\setminus\{x\}\mid X_{\{\id,y\}}(\omega)=1\}}$ which is independent of $X_{\{\id,x\}}$. For each $\omega$ we have $N_x(\omega) \leq \abs{N(\omega)} \leq N_x(\omega) +1$. This implies $\EE(N_x)\leq \norm[1]{p}$. We apply these observations to obtain
\begin{align*}
  \EE\biggl(\Bigl(\sum_{x\in G}\abs{X_{\{\id,x\}}}\Bigr)^2 \biggr)
\leq  \sum_{x\in G} \EE\left( \abs{N_x}+1\right) \EE\left(\abs{X_{\{\id,x\}}}\right)
\leq (\norm[1]p+1)\norm[1]p<\infty.
\end{align*}
This finally proves \eqref{summable}.
Therefore there exists for almost all~$\omega$
a unique self-adjoint operator $\Delta_\omega\from D_\omega\to \ell^2(G)$
with matrix elements given by~\eqref{percmatrix}.
This operator is called the Laplacian of $\Gamma_\omega$.
As the conditions given in \eqref{sym}, \eqref{train} and \eqref{summable}
hold and also condition \eqref{id_distr} is satisfied, the operator $(\Delta_\omega)_\omega$ is a random Hamiltonian, with the definition at the end of \cref{countable}. Therefore the theory developed in \cref{secrandom} is valid for this operator.
In particular the IDS exists for almost all realizations~$\omega$
and does not depend on~$\omega$.

Note that here we show weak convergence of distribution functions for almost all $\omega$. In more restricted settings one can obtain even more. For instance in \cite{Schwarzenberger-12} and \cite{ASV2012} the authors consider long-range percolation models over amenable groups and obtain uniform convergence.
However their methods rely massively on the existence of sets with an arbitrary small boundary, which is per definition not the case for non-amenable groups.

\subsection{Amenable Groups}\label{amenable}

The group~$G$ is \emph{amenable}, if it admits a \emph{F\o lner sequence},
i.e.\ an increasing sequence
$F_1\subseteq F_2\subseteq\dotsb\subseteq G$ of finite subsets of~$G$
such that $\Union_{j\in\NN}F_j=G$ and for every finite $K\subseteq G$
\begin{equation}\label{eq:amenable}
  \lim_{j\to\infty}\frac{\setsize{KF_j\Delta F_j}}{\setsize{F_j}}=0\text.
\end{equation}
Here $\Delta$ denotes the symmetric difference,
and the quotient is the size of the $K$-boundary of~$F_j$ relative to~$F_j$ itself.

Examples of amenable groups include all finitely generated Abelian groups,
since for all $d\in\NN$ the balls $B_j^{\ZZ}(0)\subseteq\ZZ^d$ of radius $j\in\NN$
with respect to any complete metric on~$\ZZ^d$ form such a sequence.
Furthermore all groups of sub-exponential growth are amenable. Therefore the famous examples $\ZZ^d$, Heisenberg group, Grigorchuk group fit in our setting. Also the Lamplighter group is amenable. 

Amenable (finitely generated) groups are easily seen to be sofic. 
Actually, let $\epsilon>0$ and $r\in\NN$ be given.
Now set $K:=B_r^G(\id)$ and choose $j\in\NN$ by~\eqref{eq:amenable}
such that $\setsize{KF_j\Delta F_j}\le\epsilon\setsize{F_j}$.
Then, define $(V_{r,\epsilon},E_{r,\epsilon})$
as the restriction $\Gamma(G,S)|_{F_j}$ of the Cayley graph of~$G$ to~$F_j$
and $V_{r,\epsilon}^{(0)}:=\Isect_{k\in K}kF_j\subseteq V_{r,\epsilon}=F_j$.

Now, by construction, we have $B_r^{V_{r,\epsilon}}(v)\subseteq V_{r,\epsilon}$
for all $v\in V_{r,\epsilon}^{(0)}$.
The required graph isomorphism in \cref{S1} is the translation
$x\mapsto xv^{-1}$.
\Cref{S2} is fulfilled, too, since
\begin{equation*}
  \setsize{V_{r,\epsilon}^{(0)}}
    =\setsize{F_j\setminus KF_j}
    \ge\setsize{F_j}-\setsize{KF_j\Delta F_j}
    \ge(1-\epsilon)\setsize{V_{r,\epsilon}}
    \text.
\end{equation*}
Studying spectral properties on discrete operators, the geometric setting of amenable groups can be seen as the natural generalization of $\ZZ^d$. This is the case as many proves rely on the property that boxes or balls in $\ZZ^d$ have a vanishing boundary, if one increases the radius. This property remains true for the above defined F\o{}lner sequences. 

For this reason convergence results for the IDS on amenable groups have intensively been studied in the literature. As mentioned before, it is for amenable groups often possible to prove even uniform convergence, see \cite{LenzV-09,LenzSV-10,PogorzelskiS-12}.%
\nocite{LenzSV-10-e}

\subsection{Residually Finite Groups}\label{residuallyfinite}
Let the group~$G$ be \emph{residually finite},
i.e.\ there exists a sequence $(G_n)_{n\in\NN}$
of normal subgroups of~$G$ such that
\begin{enumerate}[({F}1)]
  \item $[G:G_n]<\infty$ for all $n\in\NN$,
  \item $\Isect_n G_n =\{\id\}$.
\end{enumerate}

Without loss of generality we can assume $G_{n+1}\subseteq G_n$
for all $n\in\NN$, since $\tilde G_n:=\Isect_{j\le n}G_j$
is normal and of finite index
$[G:\tilde G_n]\le\prod_{j=1}^n[G:G_j]<\infty$ (count cosets).

The factor groups
\[
  H_n:={G}/{G_n} = \{gG_n\mid g\in G\}\text,
\]
$n\in\NN$, consist of the equivalence classes,
which are induced by the subgroup~$G_n$.
The group structure is inherited from~$G$.

We additionally assume $G$ to be finitely generated, namely $G=\langle S\rangle$
with a finite and symmetric set of generators $S\subseteq G$.
Then $H_n$ is generated by the set $S_n:=\{sG_n\mid s\in S\}$.

As before, the Cayley graph of~$G$ with the generators~$S$
will be denoted by~$\Gamma=\Gamma(G,S)$,
and the Cayley-Graphs of the finite groups $H_n$ with generators $S_n$
will be denoted by~$\Gamma_n=\Gamma_n(H_n,S_n)$.
The induced word metrics are $d:=d^\Gamma\from G\times G\to\NN_0$
and $d_n:=d^{\Gamma_n}\from H_n\times H_n\to\NN_0$, respectively.
We write
\[
  B_r=\{x\in G\mid d(x,\id)\le r\}
  \qtextq{and}
  B_r^{(n)}=\{x\in H_n\mid d_n(x,\id)\le r\}
\]
for the balls of radius $r\in\NN_0$.

The following Lemma shows that for increasing~$n$
the Cayley graphs of the factor groups equal the Cayley graphs of the group~$G$
on larger and larger scales.
\begin{Lemma}\label{lemma:ball}
  Let $\Gamma$ and $\Gamma_n$, $n\in\NN$ be given as above.
  Then for all $r\in \NN$ there is $n(r)\in\NN$ such that
  \begin{equation}\label{eq:ass}
    \Gamma|_{B_{r}}\simeq\Gamma_n|_{B_r^{(n)}}\qquad(n\ge n(r))\text.
  \end{equation}
  Here $\simeq$ means that the induced subgraphs are isomorphic.
  Furthermore, every residually finite group is sofic.
\end{Lemma}
\begin{proof}
  For given $r\in \NN$ choose $n(r)$ such that $B_{2r} \cap G_n = \{\id \}$ for all $n\ge n(r)$, which is possible as $G$ is residually finite.
  Then for all $n\ge n(r)$, $gG_n\isect B_r$
  contains at most one element, since for
  $h,h'\in gG_n\isect B_r$ we have
  $h^{-1}h'\in G_n \isect B_{2r}=\{\id\}$, i.e.~$h=h'$.

  The $r$-ball in $H_n$ around the identity element is
  \[
    B_r^{(n)} = \{gG_n\mid g\in B_r\}\text,
  \]
  and, still for $n\ge n(r)$, the mapping
  \[
    \psi_{n,r}:B_r^{(n)}\to B_r\quad\text{with}\quad
    \{\psi_{n,r}(gG_n)\}= gG_n\isect B_r
  \]
  is well-defined.
  Since for $g\in B_r$ we have $\psi_{n,r}(gG_n)=g$,
  $\psi_{n,r}$ is bijective.
  \par
  Because of $S_n=\{sG_n\mid s\in S\}$,
  $\psi_{n,r}$ respects the labels of the induced graphs on its domain
  and is a therefore a directed graph isomorphism.
  In particular, we can choose $V^{(0)}:=V$ to show that $G$ is sofic.
\end{proof}

\subsubsection{The Free Group}\label{freegrp}

The free group $F_s=\langle S_s\rangle$
with $s\in\NN$~generators $S_s:=\{a_1,\dotsc,a_s,a_1^{-1},\dotsc,a_s^{-1}\}$
is an example of a non-amenable residually finite group, see \cref{figB3}.
\begin{figure}
  \centering
  \begin{tikzpicture}
[scale=1
,inner sep=.5mm
,outer sep=0mm
,font=\footnotesize
,a/.style={->}
,A/.style={<-}
,b/.style={->>}
,B/.style={<<-}
,p/.style={red,|->}
,n/.style={outer sep=.5mm}
]
\draw (0.0,0.0) node[name={e},n] {$\epsilon$};
\draw (1.6,0.0) node[name={a},n] {$a$};
\draw [a] ({e}) -- ({a});
\draw (-1.6,0.0) node[name={A},n] {$A$};
\draw [A] ({e}) -- ({A});
\draw (0.0,1.6) node[name={b},n] {$b$};
\draw [b] ({e}) -- ({b});
\draw (0.0,-1.6) node[name={B},n] {$B$};
\draw [B] ({e}) -- ({B});
\draw (2.56,0.0) node[name={a^2},n] {$a^2$};
\draw [a] ({a}) -- ({a^2});
\draw (2.21702,1.28) node[name={ba},n] {$ba$};
\draw [b] ({a}) -- ({ba});
\draw (2.21702,-1.28) node[name={Ba},n] {$Ba$};
\draw [B] ({a}) -- ({Ba});
\draw (-2.56,0.0) node[name={A^2},n] {$A^2$};
\draw [A] ({A}) -- ({A^2});
\draw (-2.21702,1.28) node[name={bA},n] {$bA$};
\draw [b] ({A}) -- ({bA});
\draw (-2.21702,-1.28) node[name={BA},n] {$BA$};
\draw [B] ({A}) -- ({BA});
\draw (1.28,2.21703) node[name={ab},n] {$ab$};
\draw [a] ({b}) -- ({ab});
\draw (-1.28,2.21702) node[name={Ab},n] {$Ab$};
\draw [A] ({b}) -- ({Ab});
\draw (0.0,2.56) node[name={b^2},n] {$b^2$};
\draw [b] ({b}) -- ({b^2});
\draw (1.28,-2.21703) node[name={aB},n] {$aB$};
\draw [a] ({B}) -- ({aB});
\draw (-1.28,-2.21703) node[name={AB},n] {$AB$};
\draw [A] ({B}) -- ({AB});
\draw (0.0,-2.56) node[name={B^2},n] {$B^2$};
\draw [B] ({B}) -- ({B^2});
\draw (4.096,0.0) node[name={a^3},n] {$a^3$};
\draw [a] ({a^2}) -- ({a^3});
\draw (4.03377,0.71126) node[name={ba^2},n] {$ba^2$};
\draw [b] ({a^2}) -- ({ba^2});
\draw (4.03377,-0.71126) node[name={Ba^2},n] {$Ba^2$};
\draw [B] ({a^2}) -- ({Ba^2});
\draw (3.84898,1.40091) node[name={aba},n] {$aba$};
\draw [a] ({ba}) -- ({aba});
\draw (3.13772,2.63286) node[name={Aba},n] {$Aba$};
\draw [A] ({ba}) -- ({Aba});
\draw (3.54724,2.048) node[name={b^2a},n] {$b^2a$};
\draw [b] ({ba}) -- ({b^2a});
\draw (3.84898,-1.40091) node[name={aBa},n] {$aBa$};
\draw [a] ({Ba}) -- ({aBa});
\draw (3.13772,-2.63286) node[name={ABa},n] {$ABa$};
\draw [A] ({Ba}) -- ({ABa});
\draw (3.54724,-2.048) node[name={B^2a},n] {$B^2a$};
\draw [B] ({Ba}) -- ({B^2a});
\draw (-4.096,0.0) node[name={A^3},n] {$A^3$};
\draw [A] ({A^2}) -- ({A^3});
\draw (-4.03377,0.71126) node[name={bA^2},n] {$bA^2$};
\draw [b] ({A^2}) -- ({bA^2});
\draw (-4.03377,-0.71126) node[name={BA^2},n] {$BA^2$};
\draw [B] ({A^2}) -- ({BA^2});
\draw (-3.13772,2.63286) node[name={abA},n] {$abA$};
\draw [a] ({bA}) -- ({abA});
\draw (-3.84898,1.40091) node[name={AbA},n] {$AbA$};
\draw [A] ({bA}) -- ({AbA});
\draw (-3.54724,2.048) node[name={b^2A},n] {$b^2A$};
\draw [b] ({bA}) -- ({b^2A});
\draw (-3.13772,-2.63286) node[name={aBA},n] {$aBA$};
\draw [a] ({BA}) -- ({aBA});
\draw (-3.84898,-1.40092) node[name={ABA},n] {$ABA$};
\draw [A] ({BA}) -- ({ABA});
\draw (-3.54724,-2.048) node[name={B^2A},n] {$B^2A$};
\draw [B] ({BA}) -- ({B^2A});
\draw (2.048,3.54724) node[name={a^2b},n] {$a^2b$};
\draw [a] ({ab}) -- ({a^2b});
\draw (1.40091,3.84898) node[name={bab},n] {$bab$};
\draw [b] ({ab}) -- ({bab});
\draw (2.63286,3.13772) node[name={Bab},n] {$Bab$};
\draw [B] ({ab}) -- ({Bab});
\draw (-2.048,3.54724) node[name={A^2b},n] {$A^2b$};
\draw [A] ({Ab}) -- ({A^2b});
\draw (-1.40091,3.84898) node[name={bAb},n] {$bAb$};
\draw [b] ({Ab}) -- ({bAb});
\draw (-2.63286,3.13772) node[name={BAb},n] {$BAb$};
\draw [B] ({Ab}) -- ({BAb});
\draw (0.71126,4.03377) node[name={ab^2},n] {$ab^2$};
\draw [a] ({b^2}) -- ({ab^2});
\draw (-0.71126,4.03377) node[name={Ab^2},n] {$Ab^2$};
\draw [A] ({b^2}) -- ({Ab^2});
\draw (0.0,4.096) node[name={b^3},n] {$b^3$};
\draw [b] ({b^2}) -- ({b^3});
\draw (2.048,-3.54724) node[name={a^2B},n] {$a^2B$};
\draw [a] ({aB}) -- ({a^2B});
\draw (2.63286,-3.13772) node[name={baB},n] {$baB$};
\draw [b] ({aB}) -- ({baB});
\draw (1.40092,-3.84898) node[name={BaB},n] {$BaB$};
\draw [B] ({aB}) -- ({BaB});
\draw (-2.048,-3.54724) node[name={A^2B},n] {$A^2B$};
\draw [A] ({AB}) -- ({A^2B});
\draw (-2.63286,-3.13772) node[name={bAB},n] {$bAB$};
\draw [b] ({AB}) -- ({bAB});
\draw (-1.40091,-3.84898) node[name={BAB},n] {$BAB$};
\draw [B] ({AB}) -- ({BAB});
\draw (0.71126,-4.03377) node[name={aB^2},n] {$aB^2$};
\draw [a] ({B^2}) -- ({aB^2});
\draw (-0.71126,-4.03377) node[name={AB^2},n] {$AB^2$};
\draw [A] ({B^2}) -- ({AB^2});
\draw (0.0,-4.096) node[name={B^3},n] {$B^3$};
\draw [B] ({B^2}) -- ({B^3});
\end{tikzpicture}
  \caption{The Cayley graph of the ball~$B_3$ of radius~$3$
    in~$F_2$, with $a:=a_1$, $b:=a_2$, $A:=a^{-1}$ and $B:=b^{-1}$.
    The arrows indicate the corresponding generator:
    $x\rightarrow y$ means $ax=y$,
    $x\twoheadrightarrow y$ is synonymous for $bx=y$.
    }
  \label{figB3}
\end{figure}
For $F_s$ we provide an explicit construction of the sequence of normal
subgroups and their factor groups, along which the eigenvalue counting
functions converge.  The idea goes back to \cite{Biggs88}.

Note that, for the free group, the property~\eqref{eq:ass}
is satisfied if $\gamma_n\to\infty$,
where~$\gamma_n$ denotes the girth of the graph~$\Gamma_n$,
i.e.\ the length of the shortest circle in the Cayley graph~$\Gamma_n$.
Hence, the goal in this construction is to find a sequence of finite groups
with increasing girth.

Let $B_n$ be the ball of radius~$n\in\NN$ in $F_s$
centered at the identity, which is the empty word~$\epsilon$,
with respect to the metric on the Cayley graph~$\Gamma(F_s,S_s)$.
A short calculation reveals
\begin{equation}\label{Bnsize}
  \setsize{B_n}
    =\frac{s(2s-1)^n-1}{s-1}\text.
\end{equation}
For each generator $x\in S_s$
we define the permutation~$p_x^{(n)}$ on~$B_n$ by
\begin{equation*}
  p_x^{(n)}(w):=\begin{cases}
    xw  &(xw\in B_n)\\
    w_1^{-1}w_2^{-1}\dotsc w_m^{-1}
      &(xw\notin B_n)
  \end{cases}
\end{equation*}
for reduced words $w=w_1w_2\dotsc w_m\in B_n$
with $w_j\in S_s$, $j\in\{1,\dotsc,m\}$, see \cref{figperm1}.
\begin{figure}
  \centering
  \begin{tikzpicture}
[scale=1
,inner sep=.5mm
,outer sep=0mm
,font=\footnotesize
,a/.style={->}
,A/.style={<-}
,b/.style={->>}
,B/.style={<<-}
,p/.style={gray,|->}
,n/.style={outer sep=.5mm}
]
\draw (0.0,0.0) node[name={e},n] {$\epsilon$};
\draw (1.6,0.0) node[name={a},n] {$a$};
\draw [a] ({e}) -- ({a});
\draw (-1.6,0.0) node[name={A},n] {$A$};
\draw [A] ({e}) -- ({A});
\draw (0.0,1.6) node[name={b},n] {$b$};
\draw [b] ({e}) -- ({b});
\draw (0.0,-1.6) node[name={B},n] {$B$};
\draw [B] ({e}) -- ({B});
\draw [p] (e) to [bend right] (a);
\draw [p] (a) to [bend right] (A);
\draw [p] (A) to [bend right] (e);
\draw [p] (b) to [bend right] (B);
\draw [p] (B) to [bend right] (b);

\begin{scope} [xshift=5cm]
\draw (0.0,0.0) node[name={e},n] {$\epsilon$};
\draw (1.6,0.0) node[name={a},n] {$a$};
\draw [a] ({e}) -- ({a});
\draw (-1.6,0.0) node[name={A},n] {$A$};
\draw [A] ({e}) -- ({A});
\draw (0.0,1.6) node[name={b},n] {$b$};
\draw [b] ({e}) -- ({b});
\draw (0.0,-1.6) node[name={B},n] {$B$};
\draw [B] ({e}) -- ({B});
\draw (2.56,0.0) node[name={a^2},n] {$a^2$};
\draw [a] ({a}) -- ({a^2});
\draw (2.21702,1.28) node[name={ba},n] {$ba$};
\draw [b] ({a}) -- ({ba});
\draw (2.21702,-1.28) node[name={Ba},n] {$Ba$};
\draw [B] ({a}) -- ({Ba});
\draw (-2.56,0.0) node[name={A^2},n] {$A^2$};
\draw [A] ({A}) -- ({A^2});
\draw (-2.21702,1.28) node[name={bA},n] {$bA$};
\draw [b] ({A}) -- ({bA});
\draw (-2.21702,-1.28) node[name={BA},n] {$BA$};
\draw [B] ({A}) -- ({BA});
\draw (1.28,2.21703) node[name={ab},n] {$ab$};
\draw [a] ({b}) -- ({ab});
\draw (-1.28,2.21702) node[name={Ab},n] {$Ab$};
\draw [A] ({b}) -- ({Ab});
\draw (0.0,2.56) node[name={b^2},n] {$b^2$};
\draw [b] ({b}) -- ({b^2});
\draw (1.28,-2.21703) node[name={aB},n] {$aB$};
\draw [a] ({B}) -- ({aB});
\draw (-1.28,-2.21703) node[name={AB},n] {$AB$};
\draw [A] ({B}) -- ({AB});
\draw (0.0,-2.56) node[name={B^2},n] {$B^2$};
\draw [B] ({B}) -- ({B^2});
\draw [p] (e) to [bend right] (a);
\draw [p] (a) to [bend right] (a^2);
\draw [p] (A) to [bend right] (e);
\draw [p] (b) to [bend right] (ab);
\draw [p] (B) to [bend right] (aB);
\draw [p] (a^2) to [bend right] (A^2);
\draw [p] (ba) to [bend right] (BA);
\draw [p] (Ba) to [bend right] (bA);
\draw [p] (A^2) to [bend right] (A);
\draw [p] (bA) to [bend right] (Ba);
\draw [p] (BA) to [bend right] (ba);
\draw [p] (ab) to [bend right] (AB);
\draw [p] (Ab) to [bend right] (b);
\draw [p] (b^2) to [bend right] (B^2);
\draw [p] (aB) to [bend right] (Ab);
\draw [p] (AB) to [bend right] (B);
\draw [p] (B^2) to [bend right] (b^2);
\end{scope}

\end{tikzpicture}
  \caption{The action of the permutation $p_a^{(1)}$ on $B_1$
    and of $p_a^{(2)}$ on $B_2$.}
  \label{figperm1}
\end{figure}
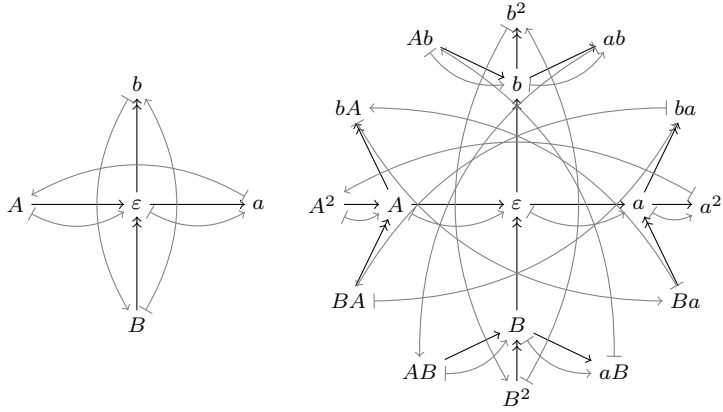
The permutation~$p_x^{(n)}$ maps elements of~$B_{n-1}$
to their neighbors in direction~$x$
and members of the sphere~$B_n\setminus B_{n-1}$
are sent into $B_n\setminus B_{r-2}$.

The group $H_n:=\langle\{p_x^{(n)}\mid x\in S_s\}\rangle$
generated by this permutations is a subgroup of the symmetric group~%
$\cS_{B_n}$ on~$B_n$.
See \cref{figH1} for an illustration of~$H_1$.
\begin{figure}
  \centering
  \input{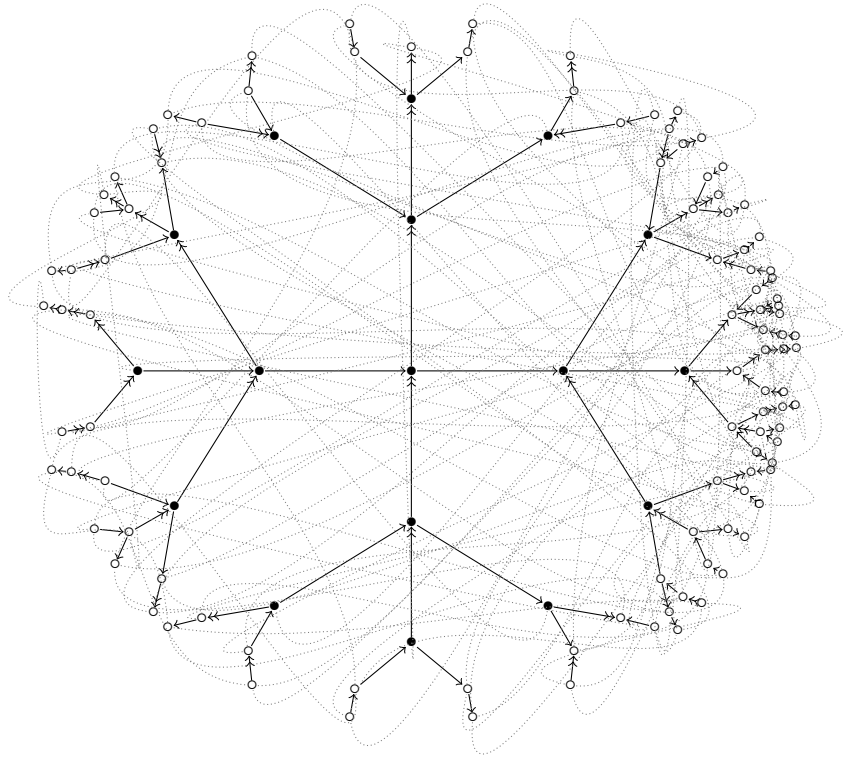}
  \caption{The Cayley graph of
    $H_1=\bigl\langle\{p_a^{(1)},p_b^{(1)}\}\bigr\rangle=\cS_{B_1}$.
    The ball of radius~$2$ around the center is marked with~$\bullet$.
    The nodes were found by a depth first search.
    Whenever an already found node was encountered again,
    a thin dotted line was added to indicate
    the neighbourship in the Cayley graph of~$H_1$.
    This drawing does not indicate the high symmetry this graph has.
    In fact, each node is center of an $F_2$-ball of radius~$2$.}
  \label{figH1}
\end{figure}

The map $S_s\ni x\mapsto p_x^{(n)}\in\cS_{B_n}$
has an extension to a group homomorphism
$\varrho_n\colon F_s\to H_n\subseteq\cS_{B_n}$ via
$\varrho_n(w_1\dots w_m):=p_{w_1}^{(n)}\circ\dotsb\circ p_{w_m}^{(n)}$.
We consider the normal subgroups $G_n:=\ker\varrho_n$ of $F_s$.

Observe that a non-empty reduced word $w=w_1\dots w_m\in G_n$
is either the empty word $w=\epsilon$ or has at least length~$m\ge2n+1$,
since the orbit of the empty word~$\epsilon\in B_n$ passes each sphere:
$\varrho_n(w_1\dots w_j)(\epsilon)\in B_j\setminus B_{j-1}$ for $j\le n$
and $\varrho_n(w_1\dots w_j)(\epsilon)\in B_n\setminus B_{2n-j}$ for $j>n$.
Therefore the girth of~$H_n$ is at least~$2n+1$ and $\Isect_n G_n=\{1\}$.

Note that this argument only guarantees girth~$3$ for $H_1$ shown in \cref{figH1},
i.e., a ball of radius~$1$ isomorphic to a ball in~$F_2$.
In fact, $H_1$ has girth~$6$ and an $F_2$-ball of radius~$2$
embedded around each element.

Whenever it seems advantageous,
one can replace the permutation~$p_x^{(n)}$ by
\begin{equation*}\label{perm2}
  \tilde p_x^{(n)}(w):=\begin{cases}
    xw  &(xw\in B_n)\\
    w   &(xw\notin B_n\text{ and }w_1\ne x)\\
    w_1^{-1}w_2^{-1}\dotsc w_m^{-1}
      &(xw\notin B_n\text{ and }w_1=x)\text,
  \end{cases}
\end{equation*}
as $\tilde p_x^{(n)}$ behaves very similar to $p_x^{(n)}$, see \cref{figperm2}.
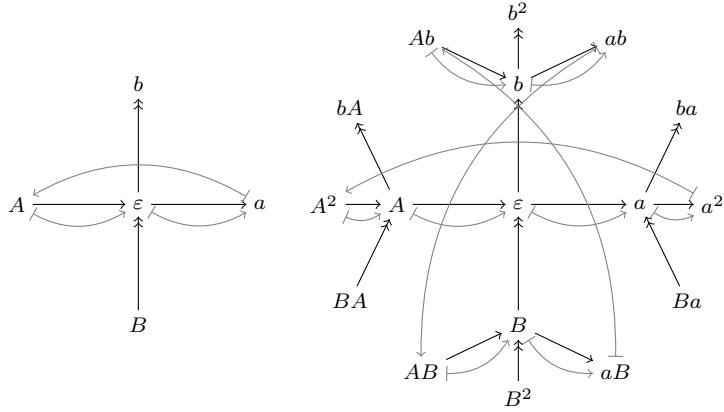
\begin{figure}
  \centering
  \begin{tikzpicture}
[scale=1
,inner sep=.5mm
,outer sep=0mm
,font=\footnotesize
,a/.style={->}
,A/.style={<-}
,b/.style={->>}
,B/.style={<<-}
,p/.style={gray,|->}
,n/.style={outer sep=.5mm}
]
\draw (0.0,0.0) node[name={e},n] {$\epsilon$};
\draw (1.6,0.0) node[name={a},n] {$a$};
\draw [a] ({e}) -- ({a});
\draw (-1.6,0.0) node[name={A},n] {$A$};
\draw [A] ({e}) -- ({A});
\draw (0.0,1.6) node[name={b},n] {$b$};
\draw [b] ({e}) -- ({b});
\draw (0.0,-1.6) node[name={B},n] {$B$};
\draw [B] ({e}) -- ({B});
\draw [p] (e) to [bend right] (a);
\draw [p] (a) to [bend right] (A);
\draw [p] (A) to [bend right] (e);

\begin{scope}[xshift=5cm]
\draw (0.0,0.0) node[name={e},n] {$\epsilon$};
\draw (1.6,0.0) node[name={a},n] {$a$};
\draw [a] ({e}) -- ({a});
\draw (-1.6,0.0) node[name={A},n] {$A$};
\draw [A] ({e}) -- ({A});
\draw (0.0,1.6) node[name={b},n] {$b$};
\draw [b] ({e}) -- ({b});
\draw (0.0,-1.6) node[name={B},n] {$B$};
\draw [B] ({e}) -- ({B});
\draw (2.56,0.0) node[name={a^2},n] {$a^2$};
\draw [a] ({a}) -- ({a^2});
\draw (2.21702,1.28) node[name={ba},n] {$ba$};
\draw [b] ({a}) -- ({ba});
\draw (2.21702,-1.28) node[name={Ba},n] {$Ba$};
\draw [B] ({a}) -- ({Ba});
\draw (-2.56,0.0) node[name={A^2},n] {$A^2$};
\draw [A] ({A}) -- ({A^2});
\draw (-2.21702,1.28) node[name={bA},n] {$bA$};
\draw [b] ({A}) -- ({bA});
\draw (-2.21702,-1.28) node[name={BA},n] {$BA$};
\draw [B] ({A}) -- ({BA});
\draw (1.28,2.21703) node[name={ab},n] {$ab$};
\draw [a] ({b}) -- ({ab});
\draw (-1.28,2.21702) node[name={Ab},n] {$Ab$};
\draw [A] ({b}) -- ({Ab});
\draw (0.0,2.56) node[name={b^2},n] {$b^2$};
\draw [b] ({b}) -- ({b^2});
\draw (1.28,-2.21703) node[name={aB},n] {$aB$};
\draw [a] ({B}) -- ({aB});
\draw (-1.28,-2.21703) node[name={AB},n] {$AB$};
\draw [A] ({B}) -- ({AB});
\draw (0.0,-2.56) node[name={B^2},n] {$B^2$};
\draw [B] ({B}) -- ({B^2});
\draw [p] (e) to [bend right] (a);
\draw [p] (a) to [bend right] (a^2);
\draw [p] (A) to [bend right] (e);
\draw [p] (b) to [bend right] (ab);
\draw [p] (B) to [bend right] (aB);
\draw [p] (a^2) to [bend right] (A^2);
\draw [p] (A^2) to [bend right] (A);
\draw [p] (ab) to [bend right] (AB);
\draw [p] (Ab) to [bend right] (b);
\draw [p] (aB) to [bend right] (Ab);
\draw [p] (AB) to [bend right] (B);
\end{scope}
\end{tikzpicture}
  \caption{The action of the permutation $\tilde p_a^{(1)}$ on $B_1$
    and of $\tilde p_a^{(2)}$ on $B_2$.
    Points without $\mapsto$, e.g.~$b^2$, are fixed points of the permutation
    for the generator~$a$, but of course not of the corresponding permutation
    for the generator~$b$.}
  \label{figperm2}
\end{figure}
That is the case in \cref{Halternates}.

The Cayley graph of $(F_s,S_s)$ is a regular tree.
For such graphs the spectral distribution function for the adjacency operator
is explicitly calculated in \cite{McKay81} as
\begin{equation}\label{eq:McKay}
  x\mapsto
    \frac{s\sqrt{4(2s-1)-x^2}}{\pi(4s^2-x^2)}
    \chi_{[0,2\sqrt{2s-1}]}(\abs x)\text.
\end{equation}
This shows in particular that the spectral distribution function,
which equals according to \cref{thm:weak1}
the integrated density of states, is continuous.
We conclude that the limit in \cref{thm:weak1}
exists for all $\lambda\in\RR$ and is actually uniform in~$\lambda$.
See \cref{figConv} for numerical visualisation.
Note that the graphs behind \cref{fig60,fig120,fig5040} are Ramanujan,
i.e.\ the spectrum of the corresponding Laplacian is contained in
$[-2\sqrt{2s-1},2\sqrt{2s-1}]$,
while the ones in \cref{fig720,fig2520} are not.

A criterion for the quality of the approximation should measure
the growth of the radius~$r$ in \cref{S1}
relative to the growth~$\setsize{V_r}$ of the approximating graphs
as well as the proportion of~$\setsize{V_r^{(0)}}$ in \cref{S2}.
In the case of the free group, we have $V_r^{(0)}=V_r$,
so the growth of the girth~$\gamma_n$
compared with the growth of the groups~$H_n$
appears to be a reasonable choice.
The best possible situation is
\genericConstant{girthconst}%
\begin{equation}\label{girthgrowth}
  \gamma_n\ge\girthconst\ln{\setsize{H_n}}
\end{equation}
for some $\girthconst>0$.
This is so, because an $s$-regular graph with girth~$\gamma$
contains at least $\setsize{B_{\floor{\gamma/2}}}$ vertices, so
\begin{equation*}
  \ln\setsize{H_n}\ge\ln\setsize{B_{\floor{\gamma_n/2}}}
    \approx\floor{\gamma_n/2}\ln(2s-1)\text.
\end{equation*}
Equations like \eqref{girthgrowth} have far reaching implications,
see e.g.\ \cite{Brooks-Lindenstrauss-2009}.
Unfortunately, the lower bound~$\gamma_n\ge2n+1$
on the girth of~$H_n$ seems hard to improve.

The trivial upper bound to $\setsize{H_n}$
is the number of permutations on~$B_n$:
\begin{equation*}
  \setsize{H_n}
    \le\setsize{B_n}!
    =\Bigl(\frac{s(2s-1)^n-1}{s-1}\Bigr)!\enspace\text.
\end{equation*}
The following lemma improves this bound.
\begin{Lemma}\label{Halternates}
  In the construction of $H_n$,
  use $p_x^{(n)}$ for odd~$n$ and $\tilde p_x^{(n)}$ for even~$n$.
  Then
  \begin{equation*}
    \setsize{H_n}
      \le\frac{\setsize{B_n}!}2\text.
  \end{equation*}
\end{Lemma}
\begin{Remark}
  In the case~$s=2$, we verified $\setsize{H_1}=60=\setsize{B_1}!/2$.
  (Here, we used $H_1=\bigl\langle\tilde p_a^{(n)},\tilde p_b^{(n)}\bigr\rangle$.
  \Cref{figH1} shows $\bigl\langle p_a^{(n)},p_b^{(n)}\bigr\rangle$,
  which has~$120$ elements.)
\end{Remark}
\begin{proof}
  We study the sign of the permutations $p_x^{(n)}$, $x\in S$, via the formula
  \begin{equation*}
    \sgn(p):=(-1)^{\elc(p)}\text,
  \end{equation*}
  where~$\elc(p)$ is the number of even-length cycles of the permutation~$p$.
  For each generator $x\in S$ and its permutation~$p_x^{(n)}$,
  the cycle containing the empty word~$\epsilon$ has length~$2n+1$, since
  \begin{equation}\label{oddcycle}
    \bigl(p_x^{(n)}\bigr)^{2n+1}(\epsilon)
      =\bigl(p_x^{(n)}\bigr)^{n+1}(x^n)
      =\bigl(p_x^{(n)}\bigr)^n(x^{-n})
      =\epsilon\text.
  \end{equation}
  The length of a cycle containing the reduced word
  $w=w_1\dots w_m\in B_n$ with $w_1\notin\{x^{-1},x\}$
  is $4(n-m)+2$, since, with $d:=n-m$,
  \begin{equation}\label{evencycle}
    \bigl(p_x^{(n)}\bigr)^j(w)=
    \begin{cases}
      x^jw
        &(j\in\{0,\dotsc,d\})\\
      x^{j-2d-1}w_1^{-1}\dots w_m^{-1}
        &(j\in\{d+1,\dotsc,3d+1\})\\
      x^{j-4d-2}w
        &(j\in\{3d+2,\dotsc,4d+2\})
    \end{cases}
  \end{equation}
  Note, that there are exactly two words not beginning in $\{x^{-1},x\}$
  in every cycle not containing~$\epsilon$.
  If one of these words is $w_1\dots w_m$,
  then the other one is $w_1^{-1}\dots w_m^{-1}$.
  Hence, the number of even-length cycles equals
  half the number of reduced words not beginning in $\{x^{-1},x\}$:
  \begin{align*}
    \elc(p_x^{(n)})&
      =\frac12\sum_{\ell=0}^{n-1}(2s-2)(2s-1)^\ell
      =\frac{(2s-1)^n-1}2\\&
      =\frac{(2(s-1)+1)^n-1}2
      =\frac12\sum_{k=1}^n\binom nk(2(s-1))^k
      \text.
  \end{align*}
  For all $k\ge2$ the summand $\frac12\binom nk(2(s-1))^k$ is even, hence
  \begin{equation*}
    \sgn(p_x^{(n)})
      =(-1)^{\frac12\sum_{k=1}^n\binom nk(2(s-1))^k}
      =(-1)^{n(s-1)}\text.
  \end{equation*}
  \par
  A similar analysis shows
  $\elc(\tilde p_x^{(n)})=\elc(p_x^{(n-1)})$ for $n\ge1$,
  and consequently:
  \begin{equation*}
    \sgn(\tilde p_x^{(n)})=(-1)^{(n-1)(s-1)}\text.
  \end{equation*}
  Therefore, $H_n$
  can be implemented as a subgroup of the alternating group on~$B_n$,
  and we get the claimed bound.
\end{proof}

The size of the groups explodes very rapidly.
Actually, the group corresponding to~$B_2\subseteq F_2$
is bounded in size only by $17!/2\approx1.778\cdot10^{14}$.
In the following we construct intermediate sizes.
The strategy is the same, we use permutations,
but this time not on balls but intermediate connected sets $\Lambda\subseteq F_s$
containing the empty word.
Let $w=w_1w_2\dotsc w_m\in \Lambda$ be a word of length~$m$.
For a generator $x\in S_s$,
we count how often we can append~$x^{-1}$ to~$w$
without leaving the set~$\Lambda$:
\begin{equation*}
  \ell_{x,\Lambda}(w):=\max\{j\in\NN\union\{0\}\mid x^{-j}w\in\Lambda\}\text.
\end{equation*}
The permutation is defined by
\begin{equation*}
  p_x^{(\Lambda)}(w):=\begin{cases}
    xw  &(xw\in\Lambda)\\
    x^{-\ell_{x,\Lambda}(w)}w
      &(xw\notin\Lambda)\text,
  \end{cases}
\end{equation*}
see \cref{figperm3}.
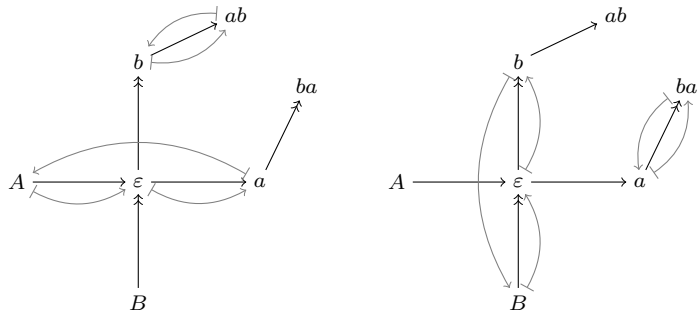
\begin{figure}
  \centering
  \begin{tikzpicture}
[scale=1
,inner sep=.5mm
,outer sep=0mm
,font=\footnotesize
,a/.style={->}
,A/.style={<-}
,b/.style={->>}
,B/.style={<<-}
,p/.style={gray,|->}
,n/.style={outer sep=.5mm}
]
\draw (0.0,0.0) node[name={e},outer sep=.5mm] {$\epsilon$};
\draw (1.6,0.0) node[name={a},outer sep=.5mm] {$a$};
\draw [a] ({e}) -- ({a});
\draw (-1.6,0.0) node[name={A},outer sep=.5mm] {$A$};
\draw [A] ({e}) -- ({A});
\draw (0.0,1.6) node[name={b},outer sep=.5mm] {$b$};
\draw [b] ({e}) -- ({b});
\draw (0.0,-1.6) node[name={B},outer sep=.5mm] {$B$};
\draw [B] ({e}) -- ({B});
\draw (1.28,2.21703) node[name={ab},outer sep=.5mm] {$ab$};
\draw [a] ({b}) -- ({ab});
\draw (2.21702,1.28) node[name={ba},outer sep=.5mm] {$ba$};
\draw [b] ({a}) -- ({ba});
\draw [p] (e) to [bend right] (a);
\draw [p] (a) to [bend right] (A);
\draw [p] (A) to [bend right] (e);
\draw [p] (b) to [bend right] (ab);
\draw [p] (ab) to [bend right] (b);

\begin{scope}[xshift=5cm]
\draw (0.0,0.0) node[name={e},outer sep=.5mm] {$\epsilon$};
\draw (1.6,0.0) node[name={a},outer sep=.5mm] {$a$};
\draw [a] ({e}) -- ({a});
\draw (-1.6,0.0) node[name={A},outer sep=.5mm] {$A$};
\draw [A] ({e}) -- ({A});
\draw (0.0,1.6) node[name={b},outer sep=.5mm] {$b$};
\draw [b] ({e}) -- ({b});
\draw (0.0,-1.6) node[name={B},outer sep=.5mm] {$B$};
\draw [B] ({e}) -- ({B});
\draw (1.28,2.21703) node[name={ab},outer sep=.5mm] {$ab$};
\draw [a] ({b}) -- ({ab});
\draw (2.21702,1.28) node[name={ba},outer sep=.5mm] {$ba$};
\draw [b] ({a}) -- ({ba});
\draw [p] (e) to [bend right] (b);
\draw [p] (a) to [bend right] (ba);
\draw [p] (b) to [bend right] (B);
\draw [p] (B) to [bend right] (e);
\draw [p] (ba) to [bend right] (a);
\end{scope}
\end{tikzpicture}
  \caption{The action of the permutation $p_a^{(\Lambda)}$ and $p_b^{(\Lambda)}$
    on the set $\Lambda=\{\epsilon,a,b,A,B,ab,ba\}$.
    Points without $\mapsto$ are fixed.}
  \label{figperm3}
\end{figure}
A similar argument as above shows that the girth of the corresponding
Cayley graph is at least~$2n+1$, when $B_n\subseteq\Lambda$.

\begin{figure}[p]
  \centering
  \begin{subfigure}[b]{.48\textwidth}
    \centering
    \input{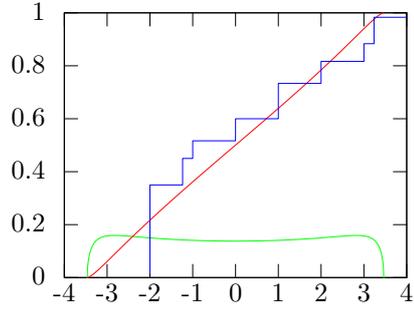}
    \caption{\centering
      Generators: $\tilde p_a^{(1)},\tilde p_b^{(1)}$,
      $5!/2=60$~nodes, girth~$3$}
    \label{fig60}
  \end{subfigure}
  \hfill
  \begin{subfigure}[b]{.48\textwidth}
    \centering
    \input{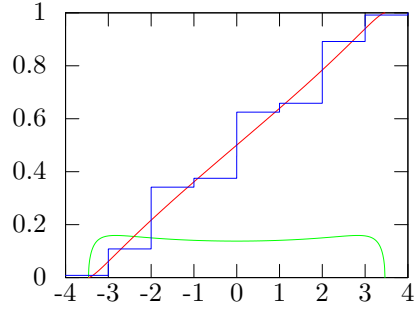}
    \caption{\centering
      Generators: $p_a^{(1)},p_b^{(1)}$,
      $5!=120$~nodes, girth~$6$, cf.~\cref{figH1}}
    \label{fig120}
  \end{subfigure}

  \begin{subfigure}[b]{.48\textwidth}
    \centering
    \input{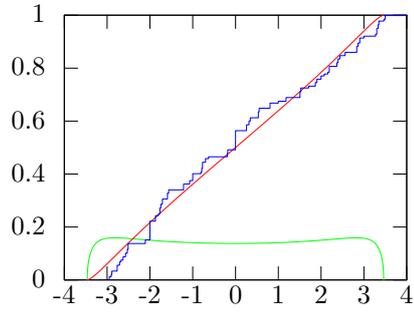}
    \caption{\centering
      Generators: $p_a^{(\Lambda)},p_b^{(\Lambda)}$
      with $\Lambda=\{\epsilon,a,b,A,B,ab\}$,
      $720$~nodes, girth~$3$}
    \label{fig720}
  \end{subfigure}
  \hfill
  \begin{subfigure}[b]{.48\textwidth}
    \centering
    \input{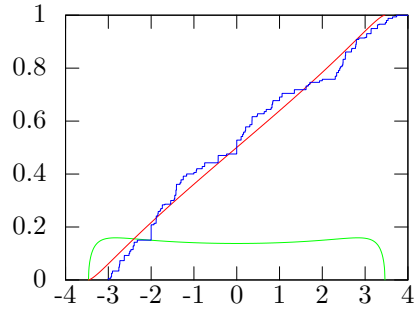}
    \caption{\centering
      Generators: $p_a^{(\Lambda)},p_b^{(\Lambda)}$
      with $\Lambda=$ $\{\epsilon,a,b,A,B,ab,AB\}$,
      $2520$~n., girth~$3$}
    \label{fig2520}
  \end{subfigure}

  \begin{subfigure}[b]{\textwidth}
    \centering
    \input{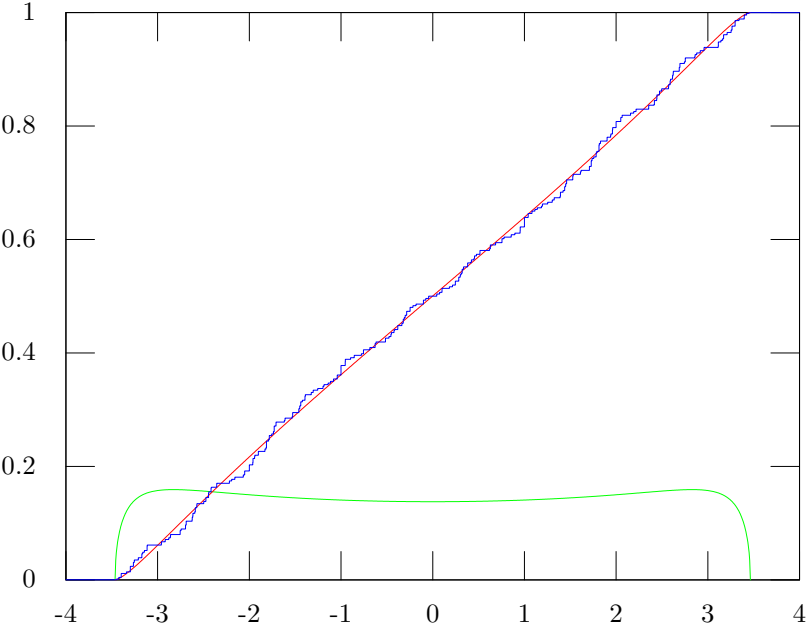}
    \caption{Generators: $p_a^{(\Lambda)},p_b^{(\Lambda)}$
      with $\Lambda=\{\epsilon,a,b,A,B,ab,ba\}$, cf.~\cref{figperm3},
      $5040$~nodes, girth~$6$}
    \label{fig5040}
  \end{subfigure}
  \caption{%
    Green: density of states, cf.~\eqref{eq:McKay},
    Red: integrated density of states,
    Blue: eigenvalue counting function of the adjacency matrix
    }
  \label{figConv}
\end{figure}

\clearpage

\renewbibmacro{in:}{
  \ifentrytype{article}
    {}
    {\printtext{\bibstring{in}\intitlepunct}}}
\addcontentsline{toc}{section}{References}
\printbibliography

\end{document}